\theoremstyle{plain}
\newtheorem{theorem}{Theorem}[section]
\newtheorem{corollary}[theorem]{Corollary}
\newcommand{\pr}{\mathfrak{pr}}
\theoremstyle{remark}
\newcolumntype{L}[1]{>{\raggedright\let\newline\\\arraybackslash\hspace{0pt}}m{#1}}
\newcolumntype{C}[1]{>{\centering\let\newline\\\arraybackslash\hspace{0pt}}m{#1}}
\newcolumntype{R}[1]{>{\raggedleft\let\newline\\\arraybackslash\hspace{0pt}}m{#1}}
\definecolor{red}{rgb}{1,0,0}
\definecolor{blu}{rgb}{0,0,1}
\definecolor{qqqqff}{rgb}{0.,0.,1.}
\title{Total Prime Labelings of Various Graphs}
\author[1]{N.~Bradley Fox$^*$}
\author[2]{Joseph Spaeth}
\affil[1]{Department of Mathematics and Statistics, Austin Peay State University, Clarksville, TN, USA
}
\affil[2]{Department of Mathematics and Statistics, Austin Peay State University, Clarksville, TN, USA}
\date{\today}
\begin{document}

\maketitle

\begin{abstract}
A total prime labeling of a graph of order $n$ is an extension of a prime labeling in which we distinctly label the vertices and edges. The goal of the labeling is for adjacent vertex labels to be relatively prime, and for each vertex of degree at least two, the greatest common divisor of the labels on its incident edges is equal to 1. In this paper, we construct total prime labelings by extending known prime and minimum coprime labelings and by developing new constructions for various classes of graphs. In particular, we show that snakes, books, prisms, prime trees, certain families of windmills, and other families of graphs are total prime.
\end{abstract}

%Keywords: graph labeling, total prime labeling
%MSC2010: 05C78.

\section{Introduction}\label{sec: intro}

A total prime labeling is an extension of a prime labeling, which was conceived by Entringer and first introduced by Tout, Dabboucy, and Howalla~\cite{TDH}. Given a simple graph $G$ of order~$n$, a \textit{prime labeling} is a bijective function assigning the integers $1,2,\ldots, n$ to the vertices such that every pair of labels on adjacent vertices is relatively prime. A graph is called \textit{prime} if such a labeling exists. In the last forty years, prime labelings have been constructed or shown to not exist for many classes of graphs, as detailed in Gallian's dynamic survey on graph labelings~\cite{Gallian}.

A recent focus has been on variations of prime labelings, in which the part of the graph being labeled, the allowable set of labels, or the relatively prime condition has been altered. Examples include the neighborhood-prime labeling~\cite{PS}, minimum coprime labeling~\cite{AF,BDH}, vertex prime labeling~\cite{DLM}, and edge vertex prime labeling~\cite{JB}. The prime labeling varient that we will investigate is the \textit{total prime labeling}, introduced by Ramasubramanian and Kala~\cite{RK}, which involves labeling both the vertices and edges. For a graph $G$ with $|E|=m$ edges and $|V|=n$ vertices, a bijection $\ell: V\cup E\rightarrow \{1,2,3,\ldots, m+n\}$ is a total prime labeling if
\begin{itemize}
\item for each pair of adjacent vertices $u$ and $v$, the labels $\ell(u)$ and $\ell(v)$ are relatively prime, or $\gcd(\ell(u),\ell(v))=1$,
\item for each vertex $v$ of degree at least 2, the greatest common divisor of the labels of all incident edges is 1, or $\gcd(\ell(uv)| u\in N(v))=1$. 
\end{itemize}
A graph that admits a total prime labeling is referred to as \textit{total prime}.

Ramasubramanian and Kala first proved all paths, stars, combs, friendship graphs, and fans to be total prime~\cite{RK}. Furthermore, they showed that cycles $C_n$ are total prime for even $n$, but are not total prime when $n$ is odd. In fact, odd cycles are the only graphs proven thus far not to be total prime. Meena and Ezhil~\cite{ME1} investigated wheels, gears, double combs, and triangular books $B_3^n$ for even $n$, showing that each admits a total prime labeling. Their later works include total prime labelings for triangular snakes $t_n$ when $n$ is even~\cite{ME2} and rectangular books $B_4^n$~\cite{ME3}. 
We continue the investigation of total prime labelings for a variety of classes of graphs. Our results are organized based on whether the graphs are known to be prime, beginning in the next section with prime graphs that contain at least one cycle. In Section~\ref{sec: nonprime} we focus on graphs that are not prime, but do exhibit total prime labelings. Section~\ref{sec: other} features classes conjectured to be prime but whose primality is not fully resolved. We consider classes of trees in Section~\ref{sec: trees}. Graphs that do not have total prime labelings are investigated in Section~\ref{sec: non-TPL}. Finally, we discuss several open problems in Section~\ref{sec: open}.

\section{Prime Cyclic Graphs}\label{sec: cyclic}

We begin our investigation of total prime labelings with prime graphs that include a cycle, saving the case of trees for later in the paper.  We first examine the \textit{helm graph}, which is based on the wheel graph $W_n = C_n + K_1$, defined by connecting one central vertex to every vertex of the outer cycle $C_n$. The helm graph, denoted $H_n$ where $n \geq 3$, is then formed from $W_n$ by adding a pendant edge to each vertex of the outer $n$-cycle. In \cite{RK}, a labeling was given to prove that the helm is total prime for all $n$, but their construction fails in the case of $n\equiv 1\pmod{3}$, because two adjacent vertices share a common factor of 3. We construct a new labeling that meets the criteria of a total prime labeling for all $n \geq 3$. An example of the helm $H_4$ with a total prime labeling is shown in Figure~\ref{helmgraph}.

\begin{figure}[h]
    \centering
\includegraphics[scale=0.35]{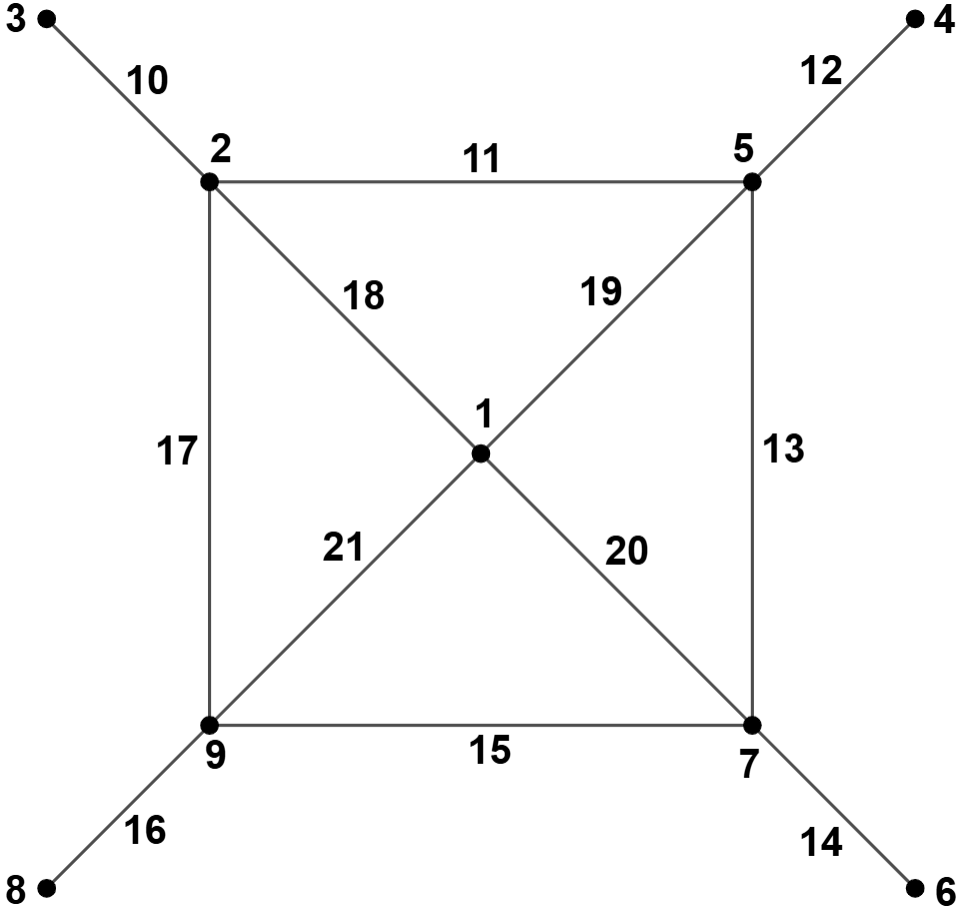}
    \caption{The helm $H_4$ with a total prime labeling}
    \label{helmgraph}
\end{figure}
\begin{theorem}\label{helm}
    The helm graph $H_n$ is total prime for all $n\geq 3$.
\end{theorem}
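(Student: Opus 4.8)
The plan is to exhibit one explicit labeling that works uniformly for all $n\ge 3$. Write $c$ for the hub, $v_1,\dots,v_n$ for the rim $n$-cycle in cyclic order, and $u_i$ for the pendant attached to $v_i$; then $H_n$ has $2n+1$ vertices and $3n$ edges, so the labels to distribute are $1,\dots,5n+1$. The first step is to set $\ell(c)=1$. Since $\gcd(1,k)=1$ for every $k$, this instantly disposes of every adjacency condition along a spoke $cv_i$, and it also leaves $c$'s own edge-gcd condition to be handled only later, through the spoke labels. After this move the only vertex-coprimality conditions left are along the rim edges $v_iv_{i+1}$ and along the pendant edges $v_iu_i$, together with the edge-gcd conditions at $c$ and at each rim vertex $v_i$ (the pendants $u_i$ have degree $1$, so they impose no edge-gcd condition).

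Next I give the $2n$ smallest remaining labels $2,3,\dots,2n+1$ to the rim and pendant vertices, two per ``arm.'' For $2\le i\le n$ set $\ell(v_i)=2i+1$ and $\ell(u_i)=2i$, and for $i=1$ set $\ell(v_1)=2$ and $\ell(u_1)=3$. Inside each arm the two labels are consecutive integers, so $\gcd(\ell(v_i),\ell(u_i))=1$ automatically. Along the rim, each adjacent pair $(v_i,v_{i+1})$ with $2\le i\le n-1$ is a pair of consecutive odd integers $2i+1,2i+3$, hence coprime; the pair $(v_1,v_2)=(2,5)$ is coprime; and the seam pair $(v_n,v_1)=(2n+1,2)$ is coprime because $2n+1$ is odd. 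Treating arm $1$ asymmetrically is exactly what makes the seam behave: had we instead taken $\ell(v_1)=3$ we would recreate the failure of the earlier construction, since $\gcd(2n+1,3)=3$ whenever $n\equiv 1\pmod 3$.

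Finally I assign the top $3n$ labels $2n+2,\dots,5n+1$ to the edges, exploiting the fact that the edge-gcd condition at a vertex is satisfied as soon as two of its incident edges get coprime labels (a gcd divides the gcd of any subset). So put $\ell(v_iv_{i+1})=2n+2i$ and $\ell(v_iu_i)=2n+2i+1$ for $1\le i\le n$ (rim indices mod $n$): these are consecutive integers and are both incident to $v_i$, which takes care of every rim vertex. This uses the labels $2n+2,\dots,4n+1$; give the remaining labels $4n+2,\dots,5n+1$ to the spokes with $\ell(cv_1)=4n+2$ and $\ell(cv_2)=4n+3$ (consecutive, hence handling $c$) and the rest in any order. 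A short check then confirms that $\ell$ is a bijection $V\cup E\to\{1,\dots,5n+1\}$ (the vertices use $\{1,\dots,2n+1\}$ and the edges use $\{2n+2,\dots,5n+1\}$) and that all the conditions isolated above hold; the base case $n=3$ is produced by the same formulas and can simply be displayed.

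The only genuine difficulty is the rim seam: once the label $1$ is spent on the hub, the rim cannot be labeled by consecutive integers ``in order,'' and for odd $n$ a clash modulo $2$ or modulo $3$ appears exactly at the wrap-around. The asymmetric choice on arm $1$ removes this obstruction while keeping the rest of the labeling completely uniform, so everything else reduces to the routine verifications sketched above.
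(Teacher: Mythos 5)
Your labeling is correct and is essentially the paper's own construction: hub labeled $1$, the asymmetric arm with $2$ and $3$ to fix the rim seam, consecutive labels on each rim--pendant vertex pair, consecutive labels on a pendant edge and an adjacent rim edge at each rim vertex, and consecutive spoke labels at the hub. The only differences are cosmetic (you swap which of the two edge types at each rim vertex receives the even versus odd label, and you leave most spoke labels arbitrary rather than fully consecutive), so no further comparison is needed.
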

\begin{proof}
    The graph $H_n$ has $2n + 1$ vertices and $3n$ edges. Let $x$ be the center vertex, $w_1, w_2, \dots, w_n$ be the vertices that form the cycle, and $v_1, v_2, \ldots, v_n$ be their corresponding pendant vertices. We create a bijective labeling $\ell: V \cup E \rightarrow \{1, 2, \ldots, 5n + 1\}$ as follows:
    \begin{align*}
        \ell(x) &= 1, \\
        \ell(w_1) &= 2, \\
        \ell(v_1) &= 3, \\
        \ell(v_i) &= 2i \;\text{for } i = 2, 3, \ldots, n, \\
        \ell(w_i) &= 2i + 1 \;\text{for } i = 2, 3, \ldots, n, \\
        \ell(v_iw_i) &= 2i + 2n \;\text{for } i = 1, 2, \ldots, n, \\
        \ell(w_iw_{i+1}) &= 2i + 2n + 1 \;\text{for } i = 1, 2, \ldots, n-1, \\
        \ell(w_nw_1) &= 4n + 1, \\
        \ell(xw_i) &= 4n + i + 1 \;\text{for } i = 1, 2, \ldots, n.
    \end{align*}

    For $\ell$ to be a total prime labeling, we must have that the labels of every pair of adjacent vertices are relatively prime. First, since $\ell(x) = 1$, it is clear that $\gcd(\ell(x), \ell(w_i)) = 1$. Also, every pair $v_i$ and $w_i$ is labeled consecutively, so $\gcd(\ell(v_i), \ell(w_i)) = 1$. Finally, notice that the pairs of vertices on the cycle, $w_i$ and $w_{i+1}$, are labeled with consecutive odd integers with the exception of $\ell(w_1) = 2$, which is adjacent to odd labels $\ell(w_2) = 5$ and $\ell(w_n) = 2n + 1$. In either case, the pairs are relatively prime.

    It must also be true that for each vertex of degree at least 2, the labels of all incident edges have a greatest common divisor of 1. Note that each $v_i$ has degree 1, so we must only consider each $w_i$ and $x$. For $i = 1, 2, \ldots, n-1$, $w_i$ is incident on edges $v_iw_i$ and $w_iw_{i+1}$ that are labeled consecutively by $2i + 2n$ and $2i + 2n + 1$, respectively. This implies that the $\gcd(\ell(uw_i)|u\in N(w_i)) = 1$. When $i = n$, $w_n$ has incident edges $v_nw_n$ and $w_nw_1$ where $\gcd(\ell(v_nw_n), \ell(w_nw_1)) = \gcd(4n, 4n + 1) = 1$. Lastly, the edges $xw_i$ are labeled consecutively for all $i = 1, 2, \ldots, n$, so the labels of all incident edges of vertex $x$ have a gcd of 1.

    The bijection $\ell$ meets the criteria of a total prime labeling, so $H_n$ is total prime for all $n \geq 3$.
\end{proof}

We now develop a total prime labeling for a cycle with any chord connecting two non-adjacent vertices. Even though the cycle $C_n$ is only total prime if $n$ is even, we will show that it is always total prime once a chord is introduced, which we denote by $C_n^+$. See Figure~\ref{cyclechordgraph} for the example $C_9^+$. This graph class will be particularly important in upcoming results for graphs that have $C_n^+$ as a subgraph.

\begin{figure}[h]
    \centering
\includegraphics[scale=0.4]{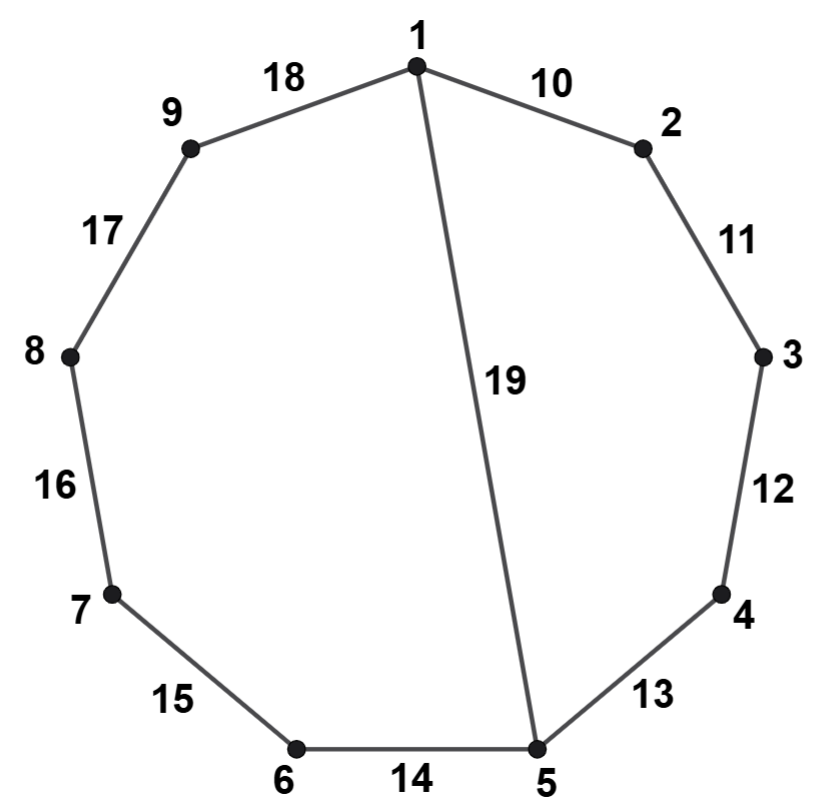}
    \caption{$C_9^+$ with a total prime labeling}
    \label{cyclechordgraph}
\end{figure}

\begin{theorem}\label{cyclechord}
    The graph consisting of a cycle with a chord, $C_n^+$, is total prime for all $n\geq 4$.
\end{theorem}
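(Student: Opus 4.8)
The plan is to exhibit a single explicit labeling that works no matter where the chord lies. By the rotational symmetry of the cycle I may assume the chord joins $w_1$ and $w_k$, where $w_1, w_2, \dots, w_n$ denote the cycle vertices in cyclic order and $3 \le k \le n-1$ (so that $w_1$ and $w_k$ are non-adjacent). Then $C_n^+$ has $n$ vertices and $n+1$ edges, so a total prime labeling is a bijection onto $\{1, 2, \dots, 2n+1\}$, and the only vertices of degree exceeding $2$ are $w_1$ and $w_k$, each of degree $3$.

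Following the pattern of the helm argument, I would give the small labels to the vertices and the large labels to the edges. Concretely: set $\ell(w_i) = i$ for $i = 1, 2, \dots, n$; label the cycle edge $w_i w_{i+1}$ by $n+i$ for $i = 1, 2, \dots, n-1$; label $w_n w_1$ by $2n$; and label the chord $w_1 w_k$ by $2n+1$. This is plainly a bijection onto $\{1, 2, \dots, 2n+1\}$.

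The two total prime conditions are then checked directly. For vertex coprimality: consecutive cycle vertices carry consecutive integers, the pair $w_n, w_1$ carries $n$ and $1$, and the chord endpoints $w_1, w_k$ carry $1$ and $k$; in each case the $\gcd$ is $1$. For the incident-edge condition: a degree-$2$ vertex is incident to two cycle edges whose labels are consecutive integers, hence have $\gcd$ $1$; the vertex $w_1$ is incident to the edges labeled $n+1$, $2n$, $2n+1$; and the vertex $w_k$ is incident to the edges labeled $n+k-1$, $n+k$, $2n+1$. In each of the last two cases two of the three labels are consecutive integers, which forces the $\gcd$ of the triple to be $1$. The small base case $n=4$ (the chord forced to be $w_1w_3$) is covered by the very same formulas.

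There is essentially no hard step here; the one thing that must be arranged is that the construction not depend on the position of the chord, and the device that achieves this is to place the label $1$ on an endpoint of the chord, which settles the coprimality of the chord's two endpoints for free. Absent that choice one would be driven into a case analysis on $\gcd(\ell(w_1),\ell(w_k))$, so the only real `obstacle' is recognizing that this uniform labeling already does the job. I would also remark that, because the labeling is so rigid and self-similar, it will be convenient to reuse when $C_n^+$ appears as a subgraph in later constructions.
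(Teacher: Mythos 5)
Your labeling is exactly the one the paper uses: vertex $i$ gets label $i$, cycle edges get $n+1$ through $2n$ with the wrap-around edge last, and the chord gets $2n+1$, with the same consecutive-integer verifications for both conditions. The proposal is correct and matches the paper's proof in every essential respect.
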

\begin{proof}
    The graph $C_n^+$ has $n$ vertices and $n+1$ edges. 
    Let $v_1, v_2, \ldots, v_n$ be the vertices of $C_n^+$, where $v_1$ and $v_k$ for some $k$ with $2<k<n$ are the endpoints of the chord.

    We define a labeling $\ell:V \cup E \rightarrow \{1, 2, \ldots, 2n+1\}$ in which
    \begin{align*}
        \ell(v_i) &= i \text{ for } i = 1, 2, \ldots, n, \\
        \ell(v_iv_{i+1}) &= n+i \text{ for } i = 1, 2, \ldots, n-1, \\
        \ell(v_nv_1) & =2n,\\
        \ell(v_1v_k) &= 2n+1.
    \end{align*}
All pairs of adjacent vertices are labeled consecutively, except for $v_1$ and its neighbors $v_k$ and $v_n$, where $\ell(v_1)=1$. Therefore, the labels of all pairs of adjacent vertices are relatively prime.

For any vertex $v_i$ with $i\neq 1,k$, its incident edges are labeled consecutively. Considering the labels of the edges incident on $v_1$, we have $\gcd(\ell(v_1v_2),\ell(v_nv_1),\ell(v_1v_k))=\gcd(n+1,2n,2n+1)=1$ since two labels are consecutive integers. Likewise for $v_k$, 
    $\gcd(\ell(v_{k-1}v_{k}),\ell(v_kv_{k+1}),\ell(v_1v_k))=\gcd(n+k-1,n+k,2n+1)=1$. Hence the gcd of the labels of the incident edges is 1 for all vertices, making $\ell$ a total prime labeling.
\end{proof}

We now consider a more general class of graphs, specifically Hamiltonian graphs, which by definition have a cycle containing all vertices. A condition is assumed for the maximum degree, denoted by $\Delta$, to ensure that the graph also contains a chord so that it has $C_n^+$ as a spanning subgraph.

\begin{theorem}\label{Hamiltonian}
    Let $G$ be a Hamiltonian graph with $\Delta\geq 3$. If $G$ is prime, then $G$ is total prime.
\end{theorem}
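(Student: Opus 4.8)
The plan is to keep the given prime labeling on the vertices untouched and to build an edge labeling that takes care of the incident-edge condition using only the cyclic structure. Since $G$ is Hamiltonian, I would fix a Hamiltonian cycle $v_1 v_2 \cdots v_n v_1$. Because $\Delta \geq 3$, some vertex of this cycle is incident to an edge not lying on the cycle, i.e.\ a chord; after cyclically shifting (and possibly reversing) the indexing, I would assume $v_1$ is such a vertex, with a chord $v_1 v_k$ where $3 \leq k \leq n-1$. This also forces $n \geq 4$ and shows $G$ contains $C_n^+$ as a spanning subgraph, so $m := |E(G)| \geq n+1$.

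Since $G$ is prime, I would assign the vertices the labels $1, 2, \ldots, n$ of a prime labeling; then every pair of adjacent vertices, chords included, gets relatively prime labels, so the first condition of a total prime labeling is automatic. It remains to label the $m$ edges bijectively with $\{n+1, \ldots, n+m\}$ so that every vertex (each has degree at least $2$, being on the Hamiltonian cycle) has incident edge labels with gcd~$1$. I would set $\ell(v_i v_{i+1}) = n+i$ for $i = 1, \ldots, n-1$, then $\ell(v_n v_1) = 2n$ and $\ell(v_1 v_k) = 2n+1$, and give the remaining $m-n-1$ edges (all chords) the labels $2n+2, \ldots, n+m$ in any order.

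The verification is then routine. For each vertex $v_i$ with $i \neq 1$, the cycle edges $v_{i-1}v_i$ and $v_i v_{i+1}$ are present and carry the consecutive labels $n+i-1$ and $n+i$, so the gcd of all its incident edge labels is $1$ regardless of any chord labels. The vertex $v_1$ is incident to the cycle edge $v_n v_1$ and the chord $v_1 v_k$, labeled $2n$ and $2n+1$, which are again consecutive, so the gcd at $v_1$ is $1$ as well. Hence $\ell$ is a total prime labeling.

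The main point to get right — really the only one — is the choice of indexing. On a bare cycle the wrap-around vertex $v_1$ would see edge labels $n+1$ and $2n$, whose gcd is $2$ when $n$ is odd (this is exactly why odd $C_n$ is not total prime), so I would make sure $v_1$ is chosen to be a vertex carrying a chord and then route the label $2n+1$ along that chord to break the common factor; every other step is forced. As a sanity check, taking $G = C_n^+$ recovers Theorem~\ref{cyclechord}, and the argument reuses the same ``consecutive labels force gcd~$1$'' mechanism.
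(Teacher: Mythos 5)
Your proposal is correct and takes essentially the same route as the paper: extend the prime vertex labeling by placing the labels $n+1,\ldots,2n+1$ on a Hamiltonian cycle plus a chord exactly as in Theorem~\ref{cyclechord}, so that every vertex acquires two consecutively labeled incident edges, and then distribute the remaining labels arbitrarily. Your added remark about anchoring the chord at the wrap-around vertex $v_1$ makes explicit a point the paper leaves implicit, but the argument is the same.
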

\begin{proof}
    Assume $G$ is of order $n$ with $m$ edges. Let $f: V \rightarrow \{1, 2, \ldots, n\}$ be the prime labeling on the vertices of $G$. We extend $f$ to create a labeling $\ell: V \cup E \rightarrow \{1, 2, \ldots, m+n\}$ in which $\ell(v) = f(v)$ for all $v\in V$.

    Since $G$ is Hamiltonian with at least one vertex having degree 3 or more, there exists a cycle $(v_1,v_2,\ldots, v_n,v_1)$ with a chord $v_1v_k$ for some $2<k< n$. We proceed by assigning edge labels for this cycle as was done in Theorem~\ref{cyclechord} using the labels $n+1,n+2,\ldots, 2n+1$. The remaining edges of $G$ can be arbitrarily assigned the labels within $\{2n+2,2n+3,\ldots, m+n\}$.

    We have that each pair of adjacent vertex labels is relatively prime since $f$ is a prime labeling. For each vertex $v_i$, $\gcd(\ell(uv_i)| u\in N(v_i))=1$ since two of the edge labels incident on $v_i$ from the cycle and chord subgraph are labeled consecutively. Thus, $\ell$ is a total prime labeling of $G$.
\end{proof}

\begin{corollary}\label{HamiltonCor}
    The following classes of graphs are total prime:
    \begin{enumerate}[(a)]
        \item The ladder graph $L_n$ for all lengths $n$
        \item The grid graph $P_m\square P_n$ where $n$ is prime, $m$ is even, and $3<m\leq n$
        \item The grid graph $P_{n+1}\square P_{n+1}$ where $n$ is an odd prime, $n=5$ or $n\equiv 3$ or $9\pmod{10}$, and $(n+1)^2+1$ is prime
        \item The star $(m,n)$-gon $S_n^{(m)}$ for all $m,n\geq 3$.
    \end{enumerate}
\end{corollary}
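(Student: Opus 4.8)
The plan is to obtain all four families as immediate consequences of Theorem~\ref{Hamiltonian}: for each class it suffices to verify that the graph is Hamiltonian, that its maximum degree satisfies $\Delta\geq 3$, and that it is prime. The first two are elementary structural observations, while the third is supplied by known prime labeling results (see Gallian's survey~\cite{Gallian} and the references therein). Once all three hold, Theorem~\ref{Hamiltonian} produces the desired total prime labeling by extending the given prime vertex labeling with the cycle-and-chord edge labeling of Theorem~\ref{cyclechord}.

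For (a), whenever $L_n$ is large enough to have a vertex of degree $3$ it carries the standard Hamiltonian cycle obtained by running along one rail, crossing an end rung, returning along the other rail, and closing across the opposite rung; since ladders are prime~\cite{Gallian}, Theorem~\ref{Hamiltonian} applies, and the finitely many smaller ladders (each a path or the $4$-cycle, the latter being total prime as an even cycle) are handled directly. For (b) and (c) the graphs are grids with one even side length --- $P_m\square P_n$ with $m$ even in (b), and $P_{n+1}\square P_{n+1}$ with $n+1$ even in (c) --- hence Hamiltonian by the usual boustrophedon construction (traverse one row, snake back and forth through the remaining cells, and return along a single reserved column), with every interior vertex of degree $4$; the arithmetic hypotheses stated in (b) and (c) are exactly those under which the corresponding grid is currently known to be prime, so Theorem~\ref{Hamiltonian} applies in each case. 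For (d), the star $(m,n)$-gon $S_n^{(m)}$ contains a spanning cycle through all of its vertices by construction, hence is Hamiltonian, and for $m,n\geq 3$ it has a vertex of degree at least $3$; it is prime~\cite{Gallian}, so Theorem~\ref{Hamiltonian} again yields a total prime labeling.

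The main obstacle is bookkeeping rather than a new argument: one must pair each family with the correct primality theorem, and in particular the grid cases inherit their restrictions on $m$, $n$, and (for (c)) on the primality of $(n+1)^2+1$ directly from what is presently known about prime labelings of grids. The Hamiltonicity and maximum-degree checks are routine, so the proof reduces to cataloguing which already-known prime graphs are also Hamiltonian with $\Delta\geq 3$ and then invoking Theorem~\ref{Hamiltonian}.
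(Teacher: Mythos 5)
Your proposal matches the paper's proof: both verify Hamiltonicity and $\Delta\geq 3$ for each family and then invoke Theorem~\ref{Hamiltonian} together with the known prime labeling results (the paper cites \cite{Dean}, \cite{SPS}, \cite{Kanetkar}, and \cite{SY} for (a)--(d) respectively). Your extra care with the degenerate small ladders ($n\leq 2$, where $\Delta<3$) is a minor refinement the paper glosses over, but the argument is otherwise identical.
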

\begin{proof}
    Ladder graphs and star $(m,n)$-gons are Hamiltonian, as are grid graphs $P_m\square P_n$ when at least one of $m$ or $n$ is even.
    Therefore, each class is total prime by Theorem~\ref{Hamiltonian} 
    based on the following prime labeling results: (a)~\cite{Dean}, (b)~\cite{SPS}, (c)~\cite{Kanetkar}, and $(d)$~\cite{SY}.
\end{proof}

Note that prism graphs are conjectured to be prime when the cycle length is even, with many cases proven to be so~\cite{HLYZ}. Since this graph class is Hamiltonian, certain prisms could have been included in the previous corollary. However, we will consider all cases of prisms in Section~\ref{sec: other}.

Next, we turn our attention to the \textit{snake graph}, denoted $S_{k,n}$, which consists of $n$ cycles $C_k$ attached so that one edge from each cycle forms a path $P_{n+1}$. An example of a total prime snake graph $S_{5,3}$ is given in Figure~\ref{snakegraph}. Previous work has established that the snake graph is total prime when $k = 3$~\cite{ME2}, and we generalize this result by providing a total prime labeling for any $k\geq 3$ and $n\geq 2$. Note that when $n=1$, $S_{k,n}=C_k$, which is not total prime for odd $k$. The proof that follows makes use of two properties of the greatest common divisor for integers $a, b,$ and $t$:
\begin{align}
    \gcd(a, b) &= \gcd(a, b-a), \label{gcd1} \\
    \gcd(a, b) &= \gcd(a+tb, b). \label{gcd2}
\end{align}

\begin{figure}[h]
    \centering
\includegraphics[scale=0.42]{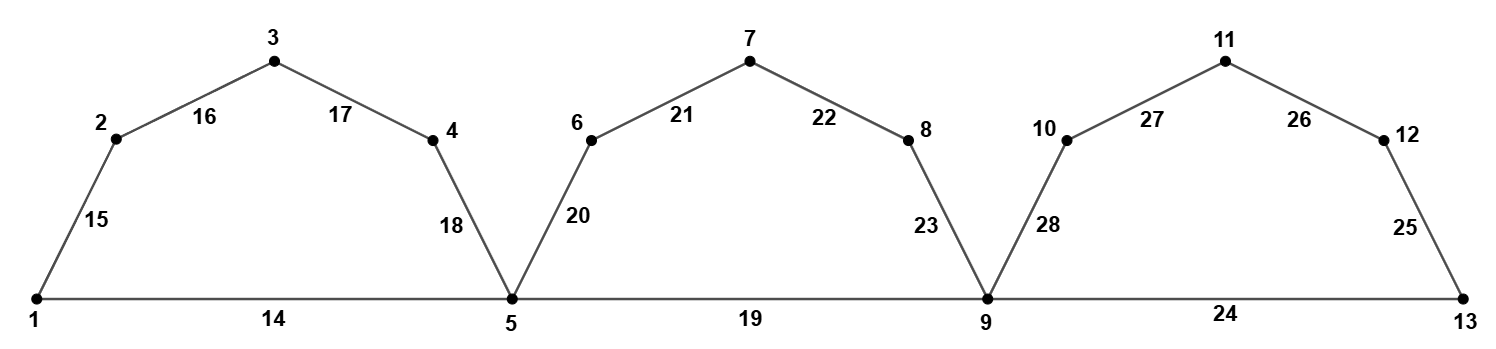}
    \caption{The snake $S_{5, 3}$ with a total prime labeling}
    \label{snakegraph}
\end{figure}
\begin{theorem}
    The snake graph $S_{k,n}$ is total prime for all $k\geq 3$ and $n\geq 2$.
\end{theorem}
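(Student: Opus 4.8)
The approach mirrors the earlier constructions in this section: fix a vertex labeling in which all adjacent pairs are coprime, then label the edges so that every vertex of degree at least two is incident to two edges bearing consecutive integer labels. The key structural fact is that $S_{k,n}$ has a spanning path. Write $u_1,\dots,u_{n+1}$ for the vertices of the spine $P_{n+1}$, with spine edge $e_j=u_ju_{j+1}$ lying on the $j$-th copy of $C_k$, and let $z_{j,1},\dots,z_{j,k-2}$ be the $k-2$ non-spine vertices of that copy, with $z_{j,0}=u_j$ and $z_{j,k-1}=u_{j+1}$. The vertices of $S_{k,n}$ have degree $4$ (namely $u_2,\dots,u_n$) or degree $2$ (everything else), and deleting the $n$ spine edges leaves a connected graph on $N:=|V|=n(k-1)+1$ vertices, with $N-1$ edges and maximum degree $2$: the Hamiltonian path $P$ that runs $u_1=z_{1,0},z_{1,1},\dots,z_{1,k-2},z_{1,k-1}=u_2=z_{2,0},\dots,z_{n,k-1}=u_{n+1}$. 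Along $P$ the vertex $u_j$ occupies position $1+(j-1)(k-1)$, so consecutive spine vertices lie $k-1$ apart; also $|E|=nk$, so the labels form $\{1,\dots,2N+n-1\}$.

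First I would label the vertices of $P$ by $1,2,\dots,N$ in path order, so $\ell(u_j)=1+(j-1)(k-1)$. Vertices adjacent along $P$ receive consecutive integers, hence coprime labels; for a spine edge we have $\ell(u_{j+1})-\ell(u_j)=k-1$ with $\ell(u_j)\equiv 1 \pmod{k-1}$, so \eqref{gcd1} gives $\gcd(\ell(u_j),\ell(u_{j+1}))=\gcd(\ell(u_j),k-1)=1$. Next, label the $N-1$ edges of $P$ by $N+1,\dots,2N-1$ in path order, and give the $n$ spine edges the remaining block $\{2N,\dots,2N+n-1\}$ --- but with a deliberate swap at the two ends: set $\ell(e_1)=2N+1$ and $\ell(e_n)=2N$, and distribute $2N+2,\dots,2N+n-1$ arbitrarily among $e_2,\dots,e_{n-1}$ (this is vacuous when $n=2$). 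The map $\ell$ so defined is a bijection $V\cup E\to\{1,\dots,2N+n-1\}$.

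For the edge condition, every intermediate vertex and every interior spine vertex $u_j$ with $2\le j\le n$ lies strictly inside $P$ and is thus incident to the two path edges immediately preceding and following it, whose labels are consecutive integers; so the gcd of its incident edge labels is $1$ (for the spine vertices this holds regardless of the two spine-edge labels also present). The genuinely constrained vertices --- and the real content of the proof --- are the degree-$2$ endpoints $u_1$ and $u_{n+1}$, which have no fallback pair of incident edges. At $u_1$ the incident edges are the first edge of $P$ (label $N+1$) and $e_1$ (label $2N+1$), and $\gcd(N+1,2N+1)=\gcd(N+1,N)=1$ by \eqref{gcd1}; at $u_{n+1}$ they are the last edge of $P$ (label $2N-1$) and $e_n$ (label $2N$), with $\gcd(2N-1,2N)=1$. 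The end-swap is exactly what forces these two gcds to equal $1$ with no case analysis --- $2N+1\equiv -1\pmod{N+1}$ and $2N\equiv 1\pmod{2N-1}$ --- whereas taking the spine-edge labels in their natural order would make $u_1$ fail for half the parities of $N$. Verifying these assertions shows $\ell$ is a total prime labeling of $S_{k,n}$ for all $k\ge 3$ and $n\ge 2$.
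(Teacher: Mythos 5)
Your proof is correct. The vertex labeling you use is exactly the paper's: both assign $1,2,\dots,N$ in order along the spanning path obtained by deleting the $n$ spine edges, so the coprimality argument for adjacent vertices (consecutive labels off the spine, and $\gcd(\ell(u_j),\ell(u_{j+1}))=\gcd(\ell(u_j),k-1)=1$ for spine pairs via Equation~(\ref{gcd1})) is identical. Where you diverge is the edge labeling. The paper threads the edge labels cycle by cycle, taking each cycle's spine edge together with its arc of intermediate edges and reversing the traversal direction on the last cycle; this arranges for every vertex, including the two degree-two spine endpoints, to be incident to two consecutively labeled edges, so no arithmetic beyond ``consecutive integers are coprime'' is ever needed. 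You instead decompose the edge set into the spanning path's $N-1$ edges (labeled consecutively in path order) and the $n$ spine edges (appended as the final block, with a swap at the two extremes), which buys a more modular description --- the interior spine edges may be permuted arbitrarily --- at the cost of one genuine computation, $\gcd(N+1,2N+1)=1$, at $u_1$. Your observation that the swap is forced is accurate: with the natural order $\ell(e_1)=2N$, the endpoint $u_1$ would see labels $N+1$ and $2N$, whose gcd is $\gcd(N+1,N-1)=\gcd(2,N-1)$, which fails for odd $N$. Both constructions are valid and cover all $k\geq 3$, $n\geq 2$.
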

\begin{proof}
    This graph has $n(k-1)+1$ vertices and $nk$ edges. We create a labeling $\ell: V\cup E \rightarrow \{1, 2, \ldots, 2nk-n+1\}$. Let $v_1$, $v_2, \ldots, v_{n+1}$ be the vertices along the path, and denote the vertices of the $i$th cycle as $v_i, w_{i, 1}, w_{i, 2}, \ldots, w_{i, k-2}, v_{i+1}$, with $1 \leq i \leq n$.
    
    We first label the vertices starting with $\ell(v_1) = 1$, then sequentially label clockwise along each of the $n$ cycles. That is, label each segment of vertices $w_{i, 1}, \ldots, w_{i, k-2}, v_{i+1}$ as $(i-1)(k-1)+2 , (i-1)(k-1)+3, \ldots, i(k-1)+1$.

    With the integers $n(k-1)+2, n(k-1)+3, \ldots, 2nk-n+1$, we sequentially label the edges of the first $n-1$ cycles clockwise along $v_iv_{i+1},v_{i}w_{i,1},\ldots, w_{i,k-2}v_{i+1}$ from $i=1$ to $n-1$. For the $n$th cycle, we label the edges counterclockwise as $v_nv_{n+1}, v_{n+1}w_{n,k-2},\ldots, v_nw_{n, 1}$.

    All adjacent vertex pairs other than $v_i$,$v_{i+1}$ are consecutive. Note that $\ell(v_i)=(i-1)(k-1)+1$, so for these remaining vertex pairs, we apply Equations~(\ref{gcd1}) and~(\ref{gcd2}) to obtain
    \begin{align*}
        \gcd(\ell(v_i), \ell(v_{i+1})) &= \gcd((i-1)(k-1)+1, i(k-1)+1) \\
        &= \gcd((i-1)(k-1)+1, k-1) \\
        &= \gcd(1, k-1) \\
        &= 1.
    \end{align*}

    For all degree-2 vertices, the incident edges are labeled consecutively, including those incident on $v_{n+1}$ due to the reverse direction on the last cycle. For vertices $v_2, \ldots, v_n$, the edges $w_{i-1, k-2}v_i$ and $v_iv_{i+1}$ are labeled consecutively, so $$\gcd(\ell(w_{i-1,k-2}v_i), \ell(v_iv_{i+1}), \ell(v_{i-1}v_i), \ell(v_iw_{i,1})) = 1.$$
    
    All pairs of adjacent vertices have relatively prime labels and the gcd of the labels of incident edges is 1 for all vertices. Thus, $S_{k,n}$ is total prime.

\end{proof}

The \textit{book graph}, denoted $B_k^n$ for $k \geq 3$ and $n \geq 2$, consists of $n$ cycles of length $k$ that all share exactly one common edge, and thus two common vertices of degree $n + 1$. Visually, the shared edge can be thought of as the spine of a book with each of the $n$ cycles being a page. See Figure~\ref{bookgraph} for the example $B_5^3$ with a total prime labeling.

Previous work has shown that books are total prime in certain cases of $k$ and $n$. In particular, the triangular book $B_3^n$ is shown to be total prime for even values of $n$ in~\cite{ME1}. A total prime labeling for the rectangular book $B_4^n$ is given in~\cite{ME3}. Here, we broaden these results by showing that $B_k^n$ is total prime for all $k \geq 3$ and $n \geq 3$. Note that when $n = 2$, the book graph $B_k^2$ takes the form of the cycle and chord $C_{2k-2}^+$ and is therefore total prime by Theorem~\ref{cyclechord}.

\begin{figure}[h]
    \centering
\includegraphics[scale=0.35]{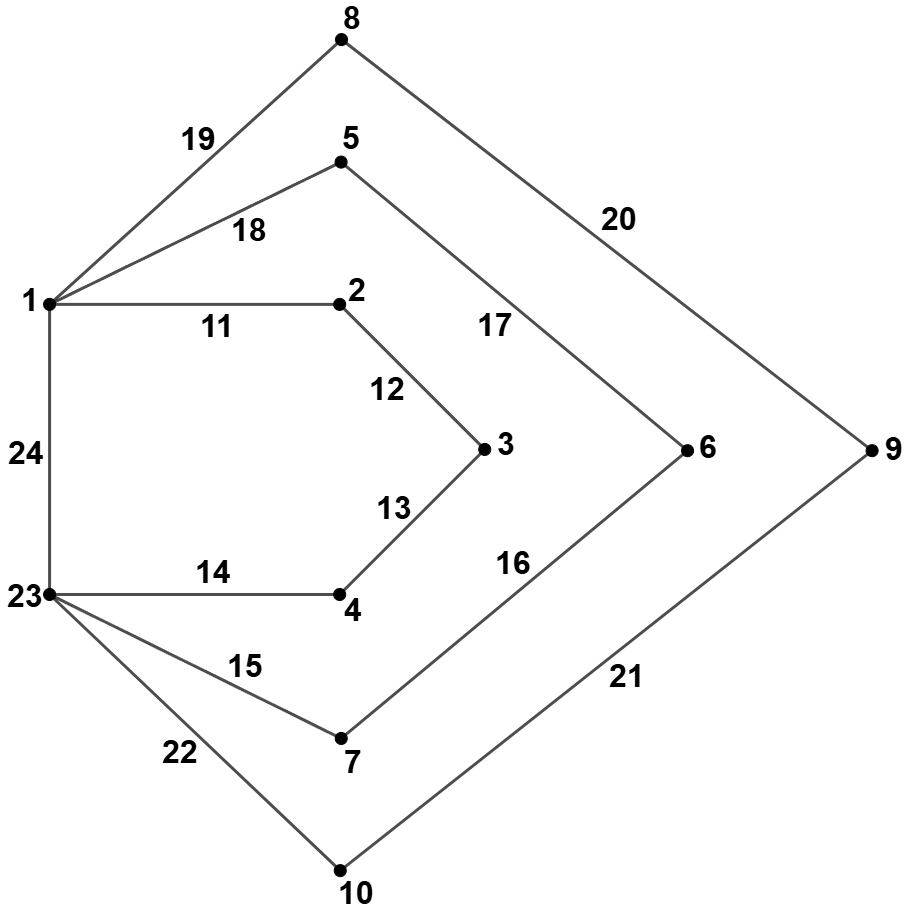}
    \caption{The book $B_5^3$ with a total prime labeling}
    \label{bookgraph}
\end{figure}
\begin{theorem} 
    The book graph $B_k^n$ is total prime for all $k \geq 3$ and $n \geq 3$.
\end{theorem}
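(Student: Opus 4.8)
The plan is to build an explicit bijection $\ell\colon V\cup E\to\{1,\dots,2nk-3n+3\}$ (the graph has $n(k-2)+2$ vertices and $n(k-1)+1$ edges). Write $u,v$ for the two spine vertices, $uv$ for the spine edge, and for page $i$ ($1\le i\le n$) let $w_{i,1},\dots,w_{i,k-2}$ be its internal vertices in order from $u$ to $v$, with edges $e_{i,0}=uw_{i,1}$, $e_{i,j}=w_{i,j}w_{i,j+1}$, and $e_{i,k-2}=w_{i,k-2}v$. I would set $\ell(u)=1$, which kills every vertex-coprimality condition at $u$. The idea is then to give the $n(k-2)$ internal vertices the small labels and the $n(k-1)+1$ edges the large labels: if each page's internal labels are $k-2$ consecutive integers and each page's edge labels are $k-1$ consecutive integers, then consecutive vertices along every page are coprime and the two incident edges at every degree-$2$ internal vertex are consecutive, hence coprime. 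What then remains is (i) the coprimality condition at $v$, and (ii) the edge-gcd conditions at the two degree-$(n+1)$ hubs $u$ and $v$.

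For (ii) it suffices to choose orientations: let $uv$ take the smallest edge label, let page $1$'s edges take the next $k-1$ labels oriented so that $e_{1,k-2}$ (incident to $v$) is consecutive with $uv$, and let page $2$'s edges take the next $k-1$ labels oriented so that $e_{2,0}$ (incident to $u$) is consecutive with $e_{1,0}$; then $\gcd=1$ at $v$ via the consecutive pair $uv,e_{1,k-2}$ and at $u$ via the consecutive pair $e_{1,0},e_{2,0}$, while pages $3,\dots,n$ take the remaining labels with no constraint on orientation.

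For (i): when $k$ is even, take $\ell(v)=2$ and give page $i$ the vertex block $\{(i-1)(k-2)+3,\dots,i(k-2)+2\}$, oriented so that the neighbor $w_{i,k-2}$ of $v$ receives the (unique) odd endpoint; then every $\ell(w_{i,k-2})$ is coprime to $2$, the internal labels tile $\{3,\dots,n(k-2)+2\}$ with no waste, and the edge labels are exactly $\{n(k-2)+3,\dots,2nk-3n+3\}$, as needed above. When $k$ is odd this parity trick fails, so instead choose $\ell(v)=p$, a prime with $\tfrac12(n(k-2)+2)<p\le n(k-2)+2$, which exists by Bertrand's postulate; then $p$ is odd and its only multiple not exceeding $n(k-2)+2$ is $p$ itself, so $p$ is coprime to every other label in $\{2,\dots,n(k-2)+2\}$, in particular to every $\ell(w_{i,k-2})$ and to $\ell(u)=1$. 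The internal vertices then receive the labels $\{2,\dots,n(k-2)+2\}\setminus\{p\}$ and the edges again receive $\{n(k-2)+3,\dots,2nk-3n+3\}$.

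The main obstacle is that $\{2,\dots,n(k-2)+2\}\setminus\{p\}$ cannot in general be split into $n$ runs of $k-2$ consecutive integers: one page's internal labels must straddle the hole, i.e., contain the two consecutive even numbers $p-1$ and $p+1$, which cannot sit on adjacent vertices. I would route this straddling page to be page $1$ and set $w_{1,1}=p-1$ and $w_{1,k-2}=p+1$: then $p-1$ is adjacent only to $u$ (coprime to $1$) and to $w_{1,2}$, while $p+1$ is adjacent only to $v$ (coprime to $p$, since $\gcd(p,p+1)=1$ — this is precisely why $\ell(v)=p$ is the convenient choice) and to $w_{1,k-3}$. Choosing page $1$'s remaining labels to include enough odd values, its interior admits a coprime ordering whose only non-consecutive adjacency can be arranged to be a pair differing by $4$ with an odd smaller term, hence coprime; one still has to pick the block position and orientation so that $w_{1,2}$ is coprime to $p-1$ and $w_{1,k-3}$ is coprime to $p+1$, which a short divisibility count handles. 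Finally, $k=3$ (the triangular book) needs separate treatment, because each page has a single degree-$2$ internal vertex whose two incident edges must be coprime, forcing the $2n$ page-edge labels to be matched into $n$ coprime pairs; with the prime label at $v$ the page-end vertices may take even labels, freeing enough odd labels for the edges, and a consecutive-integer pairing finishes it — this is where I expect the bookkeeping to be most delicate.
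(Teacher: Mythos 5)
Your even-$k$ case is sound and is essentially the paper's construction (with the roles of $1$ and $2$ on the spine swapped), and your edge-labeling scheme --- consecutive blocks per page with orientations chosen so that $uv,e_{1,k-2}$ are consecutive at $v$ and $e_{1,0},e_{2,0}$ are consecutive at $u$ --- is a correct variant of the paper's alternating trail. The genuine gap is in the odd-$k$ case. By choosing the prime $p$ inside the \emph{vertex} label range $\{2,\dots,n(k-2)+2\}$, you force one page to receive a label set straddling the hole at $p$, and the pairwise coprimality of adjacent vertices on that page is no longer automatic; your plan (put $p-1$ and $p+1$ at the two ends, fix the interior by a ``short divisibility count'') is asserted rather than proved, and it can actually fail. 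Concretely, for $k=5$ the straddling page's block is $\{a,a+1,a+2,a+3\}\setminus\{p\}$ with $a$ forced modulo $k-2=3$ by the tiling of $\{2,\dots,n(k-2)+2\}\setminus\{p\}$ into runs of length $3$; the middle vertex is then forced to be $p-2$ or $p+2$, it is adjacent to both $p-1$ and $p+1$, and $\gcd(p+2,p-1)=3$ when $p\equiv 1\pmod 3$ while $\gcd(p-2,p+1)=3$ when $p\equiv 2\pmod 3$. Since you cannot choose which of $p\pm 2$ is the middle label, you would need a prime of a prescribed residue class mod $3$ in the Bertrand window, which Bertrand's postulate does not supply. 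Similar junction pairs (differing by $k-2$ or by $3$) arise for all odd $k\ge 5$ and are not controlled by your argument. Your separate worry about $k=3$ is, by contrast, a non-issue in your own framework (each page's two edges get consecutive labels, and each internal vertex is adjacent only to $u$ and $v$), but you leave it unresolved as well.

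The paper avoids this entirely by taking $p$ to be the \emph{largest} prime in the full label set $\{1,\dots,2nk-3n+3\}$, which by Bertrand exceeds half the maximum label and hence lands among the \emph{edge} labels. The internal vertices then get the unbroken run $2,\dots,n(k-2)+1$, so all vertex adjacencies are consecutive (or involve $1$ or $p$), and the hole at $p$ affects only the edge sequence: a single degree-$2$ vertex sees the non-consecutive pair $\{p-1,p+2\}$ or $\{p-2,p+1\}$, and the one possible common factor of $3$ is eliminated by choosing whether $uv$ receives $p+1$ or $p-1$. If you want to salvage your route, you would need either to prove the existence of a suitable prime with the right congruence condition or, better, to relocate the hole into the edge labels as the paper does, where only a gcd-of-a-pair condition (not a chain of pairwise conditions along a path) must be repaired.
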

\begin{proof}
    The graph $B_k^n$ has $n(k-2) + 2$ vertices and $n(k-1) + 1$ edges. Let $u$ and $v$ be the two vertices with degree $n+1$ that are shared by each page. The $i$th page, for $i=1,2,\ldots, n$, is the cycle $(u,x_{i,1},x_{i,2},\ldots, x_{i,k-2}, v, u)$. To construct a total prime labeling $\ell: V \cup E \rightarrow \{1, 2, \ldots, 2nk - 3n + 3\}$, we consider even and odd values of $k$ separately.

    \textit{Case 1}: Assume $k \geq 4$ is even. First, define $\ell(u) = 2$ and $\ell(v) = 1$. Then, let $\ell(x_{i,j}) = (k-2)(i-1) + j + 2$ for $i = 1, 2, \ldots, n$ and $j = 1, 2, \ldots, k-2$. To verify that all adjacent vertex labels are relatively prime, notice that the pairs of adjacent vertices $v,x_{i,k-2}$ and $u,v$ have relatively prime labels because $\ell(v) = 1$. Also, for the pair $u,x_{i,1}$, observe that $\ell(x_{i,1})$ is always odd. Since $\ell(u) = 2$, $\gcd(\ell(u), \ell(x_{i,1})) = 1$. All remaining pairs of adjacent vertices have relatively prime labels because they are labeled consecutively.

    To assign labels to the edges, consecutively label $ux_{1,1}$ to $x_{1, k-2}v$ with values $n(k-2) + 3, n(k-2) + 4, \ldots, (n+1)(k-2) + 3$. Then, consecutively label edges $vx_{2, k-2}$ to $x_{2,1}u$ with values $n(k-2) + k + 2, n(k-2) + k + 3, \ldots, (n+1)(k-2) + k + 2$. Continue this consecutive labeling pattern, alternating direction with each page, until the value $2nk - 3n + 2$ is assigned. At this point, the only edge remaining to be labeled is $uv$, where we assign $\ell(uv) = 2nk - 3n + 3$. Notice that each vertex $x_{i,j}$ has a degree of 2, with its two incident edges labeled consecutively. The incident edges for the vertex $u$ include $ux_{2,1}$ and $ux_{3,1}$, which are labeled consecutively. Because we assumed $n \geq 3$, we are assured that vertex $x_{3,1}$ exists. Lastly, vertex $v$ has consecutively labeled edges $vx_{1, k-2}$ and $vx_{2, k-2}$. Every vertex has at least one pair of consecutively labeled incident edges, so the labels of each set of incident edges have a gcd of 1 for all vertices.

    \textit{Case 2}: Assume $k \geq 3$ is odd. To label the vertices, first let $p = \max\{x \in \{1, 2, \ldots, 2nk - 3n + 3\} | x \text{ is prime}\}$. By Bertrand's postulate, this largest prime $p$ satisfies $p>(2nk-3n+3)/2$, so it is relatively prime with all other integers in the labeling set.  Define $\ell(u) = 1$, $\ell(v) = p$, and $\ell(x_{i,j}) = (k-2)(i-1) + j + 1$ for $i = 1, 2, \ldots, n$ and $j = 1, 2, \ldots, k-2$. The labels of all pairs of adjacent vertices are consecutive except for those involving $u$ or $v$. Because $\ell(u) = 1$ and $\ell(v)=p$, all pairs of adjacent vertices have relatively prime labels.

    Next, we assign the edge labels. Consider the sequence of edges created by following the vertices $u, x_{1,1}, x_{1,2}, \ldots, x_{1,k-2}, v, x_{2,k-2}, \ldots, x_{2,1}, u, x_{3, 1}, \ldots, x_{3,k-2},v,\ldots\ldots$, continuing by alternating directions until all pages of the book graph are traversed. For this sequence of edges, assign labels using the following sequences of values:
    \begin{align*}
        \text{If } 3 \;|\; (p+1)&: \ n(k-2) + 2, n(k-2) + 3, \ldots, p - 2, p - 1, p + 2, p + 3, \ldots, 2nk - 3n + 3 \\
        \text{If } 3 \nmid (p+1)&: \ n(k-2) + 2, n(k-2) + 3, \ldots, p-3, p - 2, p + 1, p + 2, \ldots, 2nk - 3n + 3.
    \end{align*}
    The final remaining edge to be labeled is $uv$. If $3 \;|\; (p+1)$, assign $\ell(uv) = p + 1$. Otherwise, assign $\ell(uv) = p - 1$. 
    
    Similarly to Case 1, most of the vertices have at least two consecutively labeled incident edges. In particular, vertex $u$ is incident to the consecutively labeled edges $ux_{2,1}$ and $ux_{3,1}$, while vertex $v$ is incident to the consecutively labeled edges $vx_{1,k-2}$ and $vx_{2,k-2}$. Each vertex $x_{i,j}$ has two incident edges labeled consecutively, except possibly for a single vertex where $p$ was skipped in the edge label sequence. This vertex has incident edges labeled $p - 1$ and $p + 2$ if $3 \;|\; (p + 1)$, or $p - 2$ and $p + 1$ if $3 \nmid (p + 1)$. In each case, the assumption implies that neither label is divisible by 3, so we have $\gcd(p - 1, p + 2) = 1$ or $\gcd(p - 2, p + 1) = 1$. Thus the labels of the incident edges have gcd of 1 for all vertices.

    In both of the above cases, we have provided a total prime labeling. Therefore, $B_k^n$ is total prime for all $k \geq 3$ and $n \geq 3.$
\end{proof}

\section{Non-Prime Cyclic Graphs}\label{sec: nonprime}

We now turn our attention to classes of graphs which are not prime. The primary means for showing a prime labeling does not exist is through its independence number, which is the size of the largest set of vertices where no pair of vertices is adjacent. In order for a prime labeling to be possible, even labels must be placed on independent vertices. Therefore, as first stated in~\cite{FH}, if the independence number of a graph $G$ is less than $\lfloor|V|/2\rfloor$, then $G$ is not prime.

\textit{Complete graphs} on $n$ vertices, denoted $K_n$, have an independence number of 1 since all vertices are adjacent. Hence $K_n$ is not prime for all $n\geq 4$. We will show that it is total prime in these cases, although recall that when $n=3$, $K_3=C_3$ and is not total prime. An example of $K_6$ with a total prime labeling is displayed in Figure~\ref{completegraph}.

\begin{figure}[h]
    \centering
\includegraphics[scale=.6]{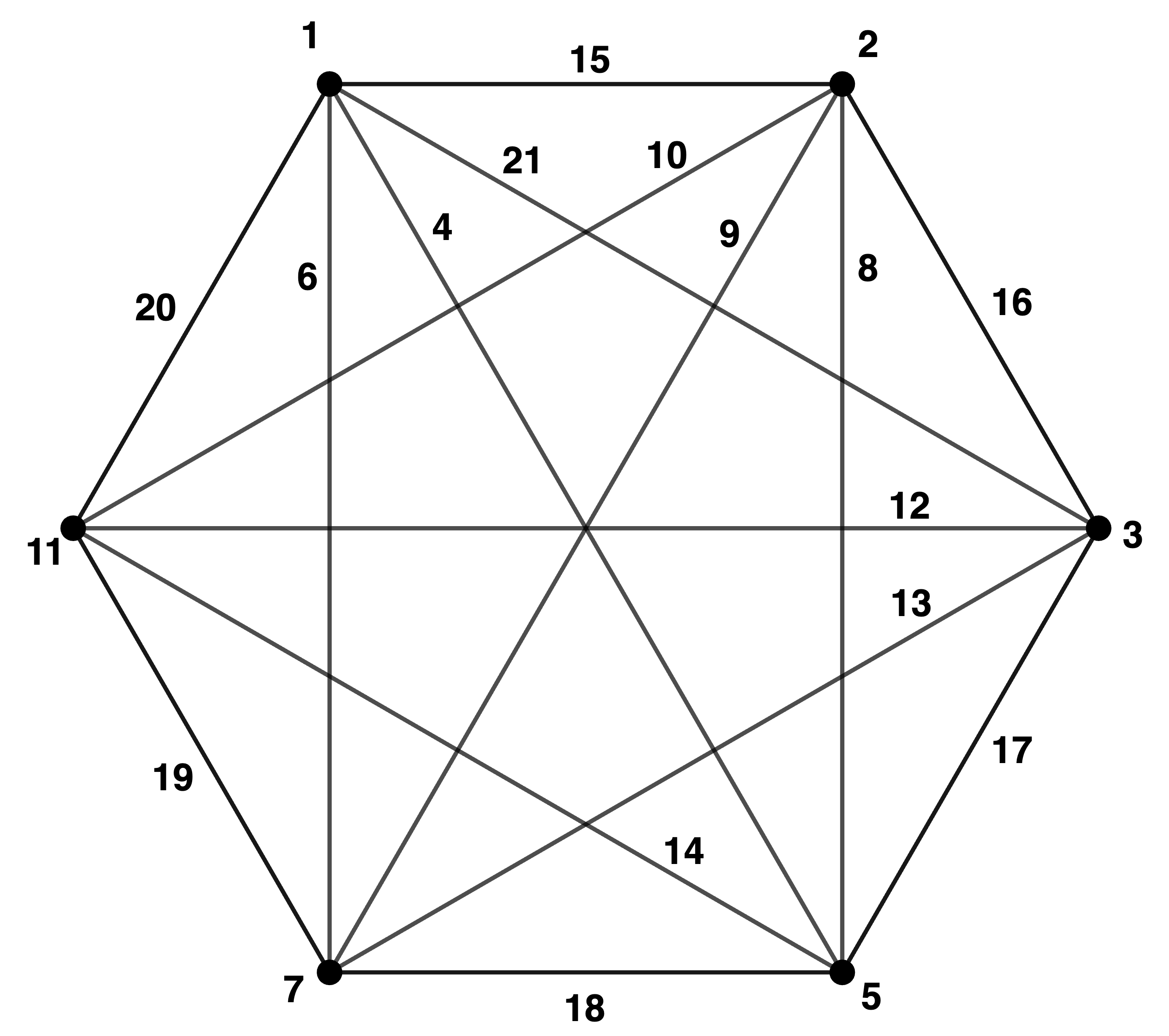}
    \caption{The complete graph $K_6$ with a total prime labeling}
    \label{completegraph}
\end{figure}

Our result regarding complete graphs will rely on an inequality derived from the Prime Number Theorem. Consider the prime counting function $\pi(x)$, which is defined as the number of primes less than or equal to a real number $x$. We will utilize a lower bound for this function~\cite{RS2}, particularly that 
\begin{equation}\label{pi of x}
    \pi(x)>\frac{x}{\ln(x)} \text{ for all } x\geq 17.
\end{equation}
 
\begin{theorem}
    The complete graph $K_n$ is total prime for all $n\geq 4$.
\end{theorem}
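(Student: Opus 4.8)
The plan is to transplant the edge labeling of Theorem~\ref{cyclechord}. Since $K_n$ is not prime for $n\ge 4$, Theorem~\ref{Hamiltonian} does not apply directly, but $K_n$ contains $C_n^+$ as a spanning subgraph for every $n\ge 4$, so the same idea should work once the vertex labels are chosen by hand. Write $N=\binom{n}{2}+n=\binom{n+1}{2}=\tfrac{n(n+1)}{2}$ for the largest label. Because every two vertices of $K_n$ are adjacent, the vertex labels are forced to be pairwise relatively prime; I would therefore put $1$ on one vertex and the first $n-1$ primes $2=p_1<p_2<\cdots<p_{n-1}$ on the remaining ones. This immediately settles the adjacent-vertex condition, as all vertex labels are $1$ or distinct primes.

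For the edges, fix a Hamiltonian cycle $v_1v_2\cdots v_nv_1$ of $K_n$ together with the chord $v_1v_3$, a spanning copy of $C_n^+$. Label its $n+1$ edges with the $n+1$ largest labels $\binom n2,\binom n2+1,\dots,\binom{n+1}{2}$ in the pattern of Theorem~\ref{cyclechord}: set $\ell(v_iv_{i+1})=\binom n2+i-1$ for $i=1,\dots,n-1$, $\ell(v_nv_1)=\binom{n+1}{2}-1$, and $\ell(v_1v_3)=\binom{n+1}{2}$. Exactly as in that proof, each vertex is then incident to two edges of this $C_n^+$ carrying consecutive integer labels (for a vertex of degree $2$ in the $C_n^+$ this is immediate; for $v_1$ the labels $\binom{n+1}{2}-1$ and $\binom{n+1}{2}$ are consecutive, and for $v_3$ the labels $\ell(v_2v_3)$ and $\ell(v_3v_4)$ are consecutive), so the gcd of the labels on the edges incident to that vertex divides $1$. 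The remaining $\binom n2-(n+1)$ edges of $K_n$ receive the leftover labels in any order; this is harmless, since enlarging a set of integers whose gcd is already $1$ leaves the gcd equal to $1$. Thus $\ell$ would be a total prime labeling.

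The only point that really needs checking is that the top block of edge labels does not overlap the vertex labels, i.e.\ $p_{n-1}<\binom n2$ (note the smallest label of the block is $\binom{n+1}{2}-n=\binom n2$). For $n=4,5,6$ this is verified directly ($p_3=5<6$, $p_4=7<10$, $p_5=11<15$), and for $n\ge 7$ it follows from (\ref{pi of x}): once $\binom n2\ge 17$ that bound gives $\pi\!\bigl(\binom n2\bigr)>\binom n2/\ln\binom n2$, and the elementary inequality $\binom n2/\ln\binom n2\ge n-1$ (equivalently $n\ge 2\ln\binom n2$, which holds for all $n\ge 7$) then forces $\pi\!\bigl(\binom n2\bigr)\ge n$, hence $p_{n-1}<p_n\le\binom n2$. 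I expect this prime-counting estimate, together with the bookkeeping of exactly which finitely many small $n$ lie outside the range where (\ref{pi of x}) applies, to be essentially the only work; the combinatorial heart of the argument is just Theorem~\ref{cyclechord} reused on a spanning $C_n^+$.
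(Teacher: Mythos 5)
Your proposal is correct and is essentially the paper's own proof: the same vertex labeling ($1$ together with the first $n-1$ primes), the same assignment of the top block of $n+1$ labels to a spanning Hamiltonian cycle with chord $v_1v_3$ as in Theorem~\ref{cyclechord}, and the same injectivity check $p_{n-1}<\binom{n}{2}$ via the bound~(\ref{pi of x}) for $n\geq 7$ with direct verification for $n=4,5,6$. No substantive differences to report.
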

\begin{proof}
    We first observe that $K_n$ has $n$ vertices and $\frac{n(n-1)}{2}$ edges for a total of $|V\cup E|=\frac{n^2+n}{2}$. We aim to create a labeling $\ell: V\cup E\rightarrow \{1,2,\ldots,\frac{n^2+n}{2}\}$. Call the vertices $v_1,v_2,\ldots, v_n$.

    We first label a subset of the edges which form a Hamiltonian cycle and chord with the largest $n+1$ labels, where we note that $\frac{n^2+n}{2}-(n+1)=\frac{n^2-n-2}{2}$. That is, assign the following edge labels:
    $$\ell(v_iv_{i+1})=\frac{n^2-n-2}{2}+i \text{ for } i=1,2,\ldots, n-1;\text{ } \ell(v_1v_n)=\frac{n^2+n}{2}-1\text{; and } \ell(v_1v_3)=\frac{n^2+n}{2}.$$
    As in the proof of Theorem~\ref{Hamiltonian}, the gcd condition on sets of incident edges is satisfied for all vertices, no matter the choice of the remaining edge labels.

    For the vertex labels, we assign $\ell(v_1)=1$ and for $i=2,3,\ldots, n$, $\ell(v_i)=p_{i-1}$ where $p_{i-1}$ is the $(i-1)$st prime number. Since $\gcd(1,p_i)$ and $\gcd(p_i,p_j)=1$ for $i\neq j$, the relatively prime condition on the vertices is also satisfied. 

    It remains to show that $\ell$ is injective. In particular, for our edge and vertex labels to not overlap, we need $\ell(v_n)=p_{n-1}\leq \frac{n^2-n-2}{2}$, or equivalently that $\pi\left(\frac{n^2-n-2}{2}\right)\geq n-1$. We will use the lower bound for $\pi(x)$ from Inequality~(\ref{pi of x}), which assumes that $x\geq 17$. Note that when $n\geq 7$, the expression $\frac{n^2-n-2}{2}$ meets this requirement, leaving the cases of $n=4,5,$ and $6$ to be considered shortly.  
    
    Then we have
    $$\pi\left(\frac{n^2-n-2}{2}\right)>\frac{\frac{n^2-n-2}{2}}{\ln(\frac{n^2-n-2}{2})}=\frac{n^2-n-2}{2(\ln(n^2-n-2)-\ln(2))}.$$
    Using a computer algebra system, one can verify that this expression is greater than $n-1$ for all $n>2.56$. While the $\pi(x)$ bound does not apply for $n=4,5$, or $6$, the necessary inequality of $p_{n-1}\leq \frac{n^2-n-2}{2}$ remains true. To verify this, we observe that
    \begin{itemize}
        \item $p_3=5=\frac{4^2-4-2}{2}$,
        \item $p_4=7<9=\frac{5^2-5-2}{2}$,
        \item $p_5=11\leq14=\frac{6^2-6-2}{2}$.
    \end{itemize}
    Therefore, our assumption of $n\geq 4$ is sufficient to guarantee enough prime numbers are available for the vertex labels without using the largest $n+1$ labels in our set, which were previously assigned to edges.

    All unassigned labels can now be used for the remaining edges outside of the cycle and chord. We already showed both relatively prime conditions for $\ell$ are satisfied. Thus, we have proven that $K_n$ is total prime.
\end{proof}

The \textit{windmill graph}, denoted $K_n^{(m)}$ where $m\geq 1$ and $n\geq 3$, consists of $m$ copies of the complete graph $K_n$ joined at one shared vertex. This graph, in the general setting, has not appeared in the literature on prime labelings or variations such as minimum coprime labelings, although it has been studied for the existence of other graph labelings. Note that the $m=1$ case is simply the complete graph, so we will focus on $m\geq 2$.

When $n=3$, this is known as the friendship graph, which is prime~\cite{MV} and total prime~\cite{RK} for all $m\geq 1$. For larger values of $n\geq 4$, upon observing that the windmill graph $K_n^{(m)}$ has $m(n-1)+1$ vertices, we see that it is not prime for any $m\geq 2$. This is because the largest independent set of vertices consists of one vertex from each of the $m$ complete graphs. Therefore, we have $\lfloor|V|/2\rfloor=\lfloor(m(n-1)+1)/2\rfloor\geq \lfloor(3m+1)/2\rfloor>m$.

Although not prime, we suspect $K_n^{(m)}$ is total prime for all $m\geq 2$ and $n\geq 3$. As an example, see Figure~\ref{windmill} in which $m=3$ and $n=4$. However, this general result remains open. We focus on showing it is true for specific cases for $m$ and $n$ in the upcoming results, beginning with $m=2$. 

\begin{figure}[h]
    \centering
\includegraphics[scale=.75]{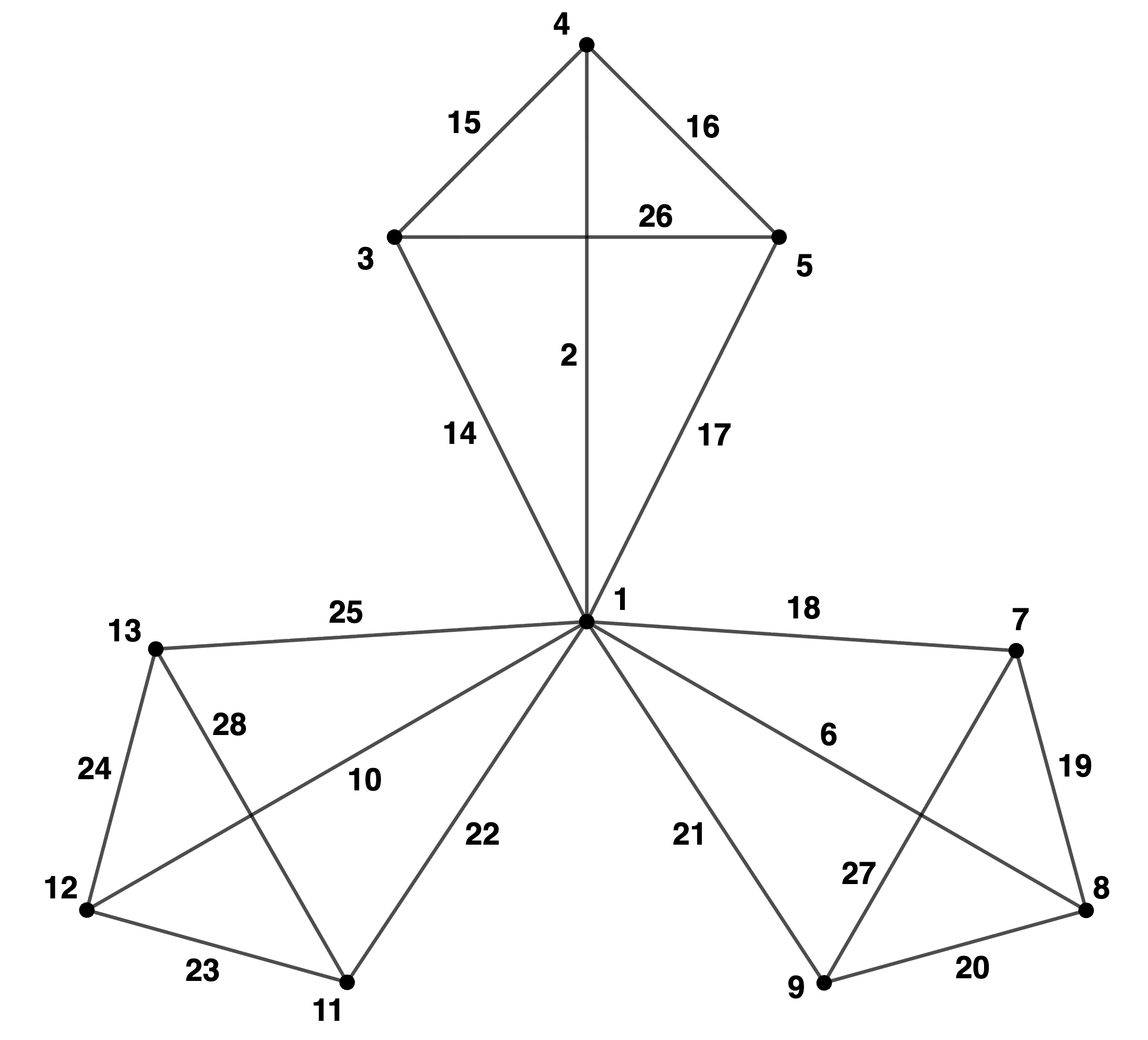}
    \caption{The windmill graph $K_{4}^{(3)}$ with a total prime labeling}
    \label{windmill}
\end{figure}

\begin{theorem}
The windmill graph $K_n^{(2)}$ is total prime for all $n\geq 4$.
\end{theorem}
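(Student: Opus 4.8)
The plan is to adapt the approach used for the complete graph $K_n$ and for Hamiltonian graphs. The windmill $K_n^{(2)}$ has $2(n-1)+1 = 2n-1$ vertices and $2\binom{n}{2} = n(n-1)$ edges, so the label set is $\{1, 2, \ldots, 2n-1 + n(n-1)\} = \{1, 2, \ldots, n^2+n-1\}$. The key structural observation is that $K_n^{(2)}$ is \emph{not} Hamiltonian (the central vertex is a cut vertex), so Theorem~\ref{Hamiltonian} does not apply directly; however, each of the two copies of $K_n$ is Hamiltonian and contains a chord when $n \geq 4$, so I would apply the cycle-with-chord edge-labeling idea of Theorem~\ref{cyclechord} separately within each copy. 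Concretely, I would pick, in each copy $K_n$, a Hamiltonian cycle through its $n$ vertices together with one chord, giving $n+1$ special edges per copy; labeling two consecutive edges incident at each vertex of that copy with consecutive integers forces the incident-edge gcd to be $1$ at every non-central vertex, and doing this in both copies handles the central vertex as well (it lies on both cycles and hence gets a consecutively-labeled pair from, say, the first copy).

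For the vertex labels, the obstruction is the same as in the $K_n$ proof: within each copy of $K_n$ all vertices are mutually adjacent, so each copy's $n$ vertex labels must be pairwise coprime. The natural choice is $\ell(\text{center}) = 1$, and then assign primes (and possibly prime powers, or the label $1$ is already used) to the remaining $2(n-1)$ vertices — but labels must be distinct across the whole graph, so I cannot reuse primes between the two copies. So I would use the first $2(n-1)$ primes $p_1, p_2, \ldots, p_{2n-2}$, putting $p_1, \ldots, p_{n-1}$ on one copy and $p_n, \ldots, p_{2n-2}$ on the other. Adjacent vertices in the same copy then have coprime labels since distinct primes are coprime and $\gcd(1, p_j) = 1$; vertices in different copies are never adjacent (except through the center, already handled). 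One subtlety: the label $2$ is a prime, so if one uses $p_1 = 2$ that is fine, but one must double-check that no vertex gets a label that coincides with an edge label — which is exactly the injectivity/counting step below.

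The main technical step — and the place I expect the real work to be — is verifying injectivity, i.e.\ that the largest vertex label $p_{2n-2}$ does not collide with the range reserved for the $2(n-1)$ ``cycle-and-chord'' edges. As in the $K_n$ proof I would reserve the top $2(n+1)$ labels of $\{1, \ldots, n^2+n-1\}$ for the $2(n+1)$ special edges, one block of $n+1$ for each copy, requiring $p_{2n-2} \leq n^2+n-1 - 2(n+1) = n^2 - n - 3$; equivalently $\pi(n^2-n-3) \geq 2n-2$. For $n$ large enough this follows from the Prime Number Theorem lower bound $\pi(x) > x/\ln x$ for $x \geq 17$ (Inequality~(\ref{pi of x})), since $\tfrac{n^2-n-3}{\ln(n^2-n-3)} > 2n-2$ holds for all sufficiently large $n$ (checkable via a computer algebra system), and the finitely many small cases $n = 4, 5, \ldots$ up to that threshold are checked by hand against a short list of primes (e.g.\ for $n=4$ one needs $p_6 = 13 \leq 4^2-4-3 = 9$, which \emph{fails} — so in fact I expect the reserved block will need to be smaller, or the small cases will need a slightly different direct assignment, and pinning down the exact threshold and handling those base cases explicitly is the crux).

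\begin{proof}
[Proposal only; see discussion above.] Adapt the $K_n$ and Hamiltonian constructions: set the center label to $1$, distribute the first $2(n-1)$ primes among the two copies' remaining vertices, reserve a top block of labels for a Hamiltonian-cycle-plus-chord within each copy (labeled so that each vertex sees two consecutive edge labels), assign all leftover labels arbitrarily to the remaining edges, and verify injectivity via a Prime-Number-Theorem count for large $n$ together with a finite hand-check for small $n$.
\end{proof}
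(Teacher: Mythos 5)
Your overall strategy matches the paper's in spirit: center labeled $1$, primes on the remaining vertices, the largest labels reserved for cycle edges inside each copy of $K_n$, and a prime-counting argument for injectivity. But the gap you flag at the end is genuine and is exactly where your version breaks. Using the first $2n-2$ primes forces $p_{2n-2}$ to lie below the reserved edge block, and this fails at $n=4$ no matter how you shrink that block: even if you drop both chords and reserve only the $2n$ labels needed for the two cycles, you would need $p_6=13\leq n^2-n-1=11$, which is false (and with your $2(n+1)$-label block the cases $n=5,6$ fail as well). So ``a slightly different direct assignment for the small cases'' is not optional clean-up; without it the theorem is unproved precisely at $n=4$. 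The paper's fix is to spend one fewer prime: it labels one vertex of the second copy with $4$, which is coprime to $1$ and to the odd primes used in that copy, so only the first $2n-3$ primes are needed. The injectivity condition becomes $p_{2n-3}<n^2-n$, which already holds at $n=4$ since $p_5=11<12$; the cases $n=4,5,6$ are checked by hand and $n\geq 7$ via Inequality~(\ref{pi of x}).

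Two smaller points. First, the chords are unnecessary: the paper labels the two cycles $x,v_1,\ldots,v_{n-1},x$ and $x,w_1,\ldots,w_{n-1},x$ with two back-to-back blocks of consecutive labels, so every non-central vertex sees two consecutive incident edge labels from its own cycle, and the center sees the consecutive pair $n^2-1$, $n^2$ where the two blocks meet. Your parenthetical claim that the center ``gets a consecutively-labeled pair from the first copy'' is only true if that copy's chord is placed at the center (as $v_1$ is an endpoint of the chord in Theorem~\ref{cyclechord}); if the chord is elsewhere, the center's two cycle edges within one copy carry the first and last labels of that block, which are not consecutive. Second, dropping the chords shrinks the reserved block from $2n+2$ to $2n$ labels, which, together with the $4$-label trick, is what lets the counting close for all $n\geq 4$.
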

\begin{proof}
First observe that this graph has $2n-1$ vertices, and the number of edges consists of twice the number for a complete graph $K_n$, or $n(n-1)=n^2-n$. Therefore, our labeling set for a total prime labeling is $\{1,2,\ldots, n^2+n-1$\}.

Call the shared vertex $x$, and the vertices on the two complete graphs $v_1,v_2,\ldots, v_{n-1}$ and $w_1,w_2,\ldots, w_{n-1}$. All pairs of $v_i$ and $v_j$ are adjacent, likewise for $w_i$ and $w_j$, and $x$ is connected by an edge to every other vertex. Define the vertex labels as $\ell(x)=1$, $\ell(v_i)=p_i$ for $i=1,2,\ldots, n-1$, $\ell(w_1)=4$, and $\ell(w_i)=p_{n-2+i}$ for $i=2,3,\ldots, n-1$. Note the largest vertex label is the $p_{2n-3}$, or the $(2n-3)$rd prime number.

For the edge labels, we first label two cycles, one within each complete graph. Assign to the sequence of edges $xv_1, v_1v_2,\ldots, v_{n-2}v_{n-1}, v_{n-1}x$ the integers $n^2-n, n^2-n+1, \ldots, n^2-1$. Then we label the edges $xw_1, w_1w_2, \ldots, w_{n-2}w_{n-1}, w_{n-1}x$ the values $n^2, n^2+1, \ldots, n^2+n-1$, the last of which is the largest integer in our labeling set.

Considering the vertex labels, all adjacent pairs of labels include $\ell(x)=1$, are two distinct prime numbers, or are $\ell(w_1)=4$ with an odd prime number. Hence all of these pairs are relatively prime.

For each of the vertices $v_i$ or $w_i$, the edges incident on that vertex within the two cycles have consecutive labels. Likewise, $\ell(v_{n-1}x)=n^2-1$ and $\ell(xw_1)=n^2$ are also consecutively labeled. Therefore, the gcd condition for incident edges on $x$ is also satisfied.

All that remains to prove $\ell$ is a total prime labeling is to confirm that the vertex and edge labels do not overlap. That is, we need to show $p_{2n-3}<n^2-n$, or likewise $\pi(n^2-n-1)\geq 2n-3.$ For the values $n=4, 5,$ and $6$, we have $p_5=11<12=4^2-4$, $p_7=17<20=5^2-5$, and $p_9=23<30=6^2-6$.

For $n\geq 7$, since $n^2-n-1>17$, we use Inequality~(\ref{pi of x}) to obtain
$$\pi(n^2-n-1)>\frac{n^2-n-1}{\ln(n^2-n-1)}\geq 2n-3,$$
where the second inequality is true for all $n>6.93$. This shows there are sufficient prime numbers to label the vertices which are smaller than the first specified edge label $n^2-n$. Thus, once all remaining labels are distinctly assigned to the other edges, we have a total prime labeling of all windmill graphs with $m=2$. 
\end{proof}

We now fix $n$ for small values, while letting $m\geq 2$ vary. Note that when $m=1$, the windmill is simply the complete graph, which we already investigated.

\begin{theorem}
    The windmill graph $K_4^{(m)}$ is total prime for all $m\geq 2$.
\end{theorem}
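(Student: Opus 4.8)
The plan is to mimic the proof of the $K_n^{(2)}$ case, but with $m$ copies of $K_4$ all sharing a common vertex $x$. First I would record the combinatorial data: $K_4^{(m)}$ has $3m+1$ vertices and $6m$ edges, so the label set is $\{1,2,\ldots,9m+1\}$. Denote the shared vertex by $x$ and the three outer vertices of the $j$th copy by $v_{j,1},v_{j,2},v_{j,3}$ for $j=1,2,\ldots,m$. The edge structure of each copy is: the three ``cycle'' edges $xv_{j,1}$, $v_{j,1}v_{j,2}$, $v_{j,2}v_{j,3}$ (together with $v_{j,3}x$ this closes a $4$-cycle through $x$), plus the chord $v_{j,1}v_{j,3}$ — exactly a $C_4^+$ in each page, which is the key to forcing the incident-edge gcd condition as in Theorem~\ref{cyclechord} and Theorem~\ref{Hamiltonian}.

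For the vertex labels, set $\ell(x)=1$, and then assign the remaining $3m$ outer vertices the labels $\{2,3,\ldots,3m+1\}$ in a way that makes every edge's two endpoints coprime. Within a single copy all three outer vertices are mutually adjacent and all adjacent to $x$; since $\ell(x)=1$ is coprime to everything, the only constraint is that within each triple $\{v_{j,1},v_{j,2},v_{j,3}\}$ the three labels are pairwise coprime. A clean way to guarantee this is to use the labels $2,3,4$ on the first page (pairwise coprime since $\gcd(2,3)=\gcd(3,4)=\gcd(2,4)$—wait, $\gcd(2,4)=2$), so instead I would be careful: one robust choice is to put, in each triple, at most one even number and otherwise consecutive or prime-spaced values; more simply, for $j\ge 1$ assign the triple $\{3j-1,3j,3j+1\}$ (three consecutive integers, hence pairwise coprime except the two odd ones among them differ by $2$, and consecutive integers are coprime, and two integers differing by $2$ are coprime unless both even — here at most one is even, so pairwise coprimality holds). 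This gives a bijection from the outer vertices onto $\{2,\ldots,3m+1\}$ with every page internally coprime.

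For the edge labels I would use the top $6m$ labels $\{3m+2,3m+3,\ldots,9m+1\}$, assigning them page by page in blocks of six so that within each page the $C_4^+$ receives six consecutive integers laid out exactly as in the proof of Theorem~\ref{cyclechord} (cycle edges consecutive, chord last): this forces, at each outer vertex $v_{j,i}$, two of its incident edges to be consecutive integers, hence gcd $1$; and at $x$, the edges $xv_{j,1}$ for different $j$ — or better, two incident edges of $x$ coming from the same page's cycle start — can be made consecutive, so the gcd of all edges at $x$ is $1$. One must check the block boundaries line up so that some pair of $x$'s incident edges is consecutive; arranging each page's block to begin with $\ell(xv_{j,1})$ and alternating the traversal direction as in the book-graph proof handles this.

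The main obstacle is purely bookkeeping: verifying that the vertex labels $\{1,\ldots,3m+1\}$ and edge labels $\{3m+2,\ldots,9m+1\}$ partition $\{1,\ldots,9m+1\}$ (immediate), that the per-page triple assignment $\{3j-1,3j,3j+1\}$ really is pairwise coprime for every $j$ (the one even member argument), and that the six-consecutive-integers layout on each $C_4^+$ page simultaneously secures the incident-edge gcd at all outer vertices and at the hub $x$. Unlike the $K_n^{(2)}$ theorem, no prime-counting estimate is needed here because $n=4$ is fixed and three consecutive integers are always pairwise coprime; the whole difficulty is making the explicit index arithmetic consistent across the $m$ pages. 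I would close by asserting both total-prime conditions hold and conclude $K_4^{(m)}$ is total prime for all $m\ge 2$.
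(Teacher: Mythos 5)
There is a genuine gap, and it is in the vertex labeling. You assign the outer vertices of page $j$ the triple $\{3j-1,3j,3j+1\}$ and assert that three consecutive integers are pairwise coprime because ``at most one is even.'' That is false whenever the middle element $3j$ is odd, i.e.\ for every odd $j$: then $3j-1$ and $3j+1$ are both even and share the factor $2$. Your own first page $\{2,3,4\}$ is the counterexample (you even noticed $\gcd(2,4)=2$ mid-sentence before talking yourself out of it). Since the three outer vertices of a page are mutually adjacent in $K_4$, this violates the adjacent-vertex condition on roughly half the pages, so the labeling is not total prime.

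Moreover, the defect cannot be repaired while keeping your plan of using exactly $\{2,\ldots,3m+1\}$ for the $3m$ outer vertices: each page can carry at most one even vertex label (its three outer vertices are pairwise adjacent), so $m$ pages absorb at most $m$ even labels, yet $\{2,\ldots,3m+1\}$ contains about $3m/2$ even integers. You are forced to skip even values, which is exactly what the paper's proof does: it labels page $i$ with the triple $\{4i-1,4i,4i+1\}$ (one even, two odd differing by $2$, hence pairwise coprime), leaving the $m$ values $\equiv 2 \pmod 4$ unused among the small labels; these are then recycled, together with the top $m$ values, onto the $2m$ edges not covered by the consecutively labeled closed trail $w,v_{1,1},v_{1,2},v_{1,3},w,v_{2,1},\ldots,v_{m,3},w$. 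Your edge-labeling idea (consecutive blocks per page arranged as a cycle with chord) is sound and close in spirit to the paper's single-trail argument for the incident-edge gcd condition, but the proof does not go through until the vertex triples are respaced as above.
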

\begin{proof}
    The graph $K_4^{(m)}$ has $3m+1$ vertices and $6m$ edges. Let $w$ be the center vertex, and assume $v_{i, 1}$, $v_{i, 2}$, and $v_{i, 3}$ are the vertices of the $i$th copy of $K_4$.

    Define a labeling $\ell: V \cup E \rightarrow \{1, 2, \ldots, 9m+1\}$ where the vertices are labeled as follows:
    $$\ell(w) = 1 \text{ and } \ell(v_{i,j}) = 4i + j - 2, \text{ for } i = 1, 2, \ldots, m \text{ and } j = 1, 2, 3.$$
    To label the edges, first consider the trail formed by vertices $$w, v_{1,1}, v_{1,2}, v_{1,3}, w, v_{2,1}, v_{2,2}, v_{2,3}, w, \ldots, v_{m,1}, v_{m,2}, v_{m,3}, w.$$
    Consecutively label the edges in this trail with the values $4m+2$ through $8m+1$.

    Observe that the labels of all pairs of adjacent vertices are relatively prime. First, we know that $\gcd(\ell(v_{i,j}), \ell(v_{i,j+1})) = 1$ for $j = 1, 2$ because they are consecutive. Next, we have $\gcd(\ell(v_{i,1}), \ell(v_{i,3})) = \gcd(4i-1, 4i+1) = 1$. Lastly, because $\ell(w) = 1$, the condition holds for all pairs of adjacent vertices that include $w$.

    The consecutively labeled closed trail includes all vertices, including $w$ multiple times, eliminating the need for a separate chord used in the Hamiltonian cycle approach.  Therefore, each vertex has two incident edges with consecutive labels, so the gcd of the labels of incident edges is 1 for all vertices.

    Notice that the aforementioned trail consists of $4m$ edges, so there are $|E| - 4m = 6m - 4m=2m$ edges that remain to be labeled, specifically those of the form $wv_{i,2}$ and $v_{i,1}v_{i,3}$. Because we skipped $m$ values in the vertex labeling and the $m$ values from $8m+2$ to $9m+1$ remain unassigned, we can arbitrarily assign these $2m$ values to these remaining $2m$ edges. Thus, we have a total prime labeling of $K_4^{(m)}$.
\end{proof}

\begin{theorem}
    The windmill graph $K_5^{(m)}$ is total prime for all $m\geq 2$.
\end{theorem}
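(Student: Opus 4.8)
The plan is to follow the same template as the preceding $K_4^{(m)}$ proof: set the shared vertex to $1$, give the four non-central vertices of each copy of $K_5$ a set of pairwise coprime labels, then run a closed trail through the whole graph whose consecutive labeling forces every vertex to have two consecutively labeled incident edges, and finally assign the leftover labels to the leftover edges arbitrarily. Since $K_5^{(m)}$ has $4m+1$ vertices and $10m$ edges, the target label set is $\{1,2,\ldots,14m+1\}$.

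The only genuinely new ingredient is the vertex labeling. For $K_4^{(m)}$ each copy could use three consecutive integers, which are automatically pairwise coprime, but four consecutive integers never are (two of them are even), so a different block is needed. I would partition $\{2,3,\ldots,6m+1\}$ into blocks of six and assign the $i$th copy the labels $6i-3,\,6i-2,\,6i-1,\,6i+1$, together with $\ell(w)=1$; this skips $6i-4$ and $6i$ in each block. These four labels are pairwise coprime for every $i$: the difference of any two lies in $\{1,2,3,4\}$, and for the differences $1,2,4$ the fact that each such pair contains an odd element forces the gcd to be $1$, while the unique pair with difference $3$ is $\{6i-2,6i+1\}$ with $6i-2\equiv 1\pmod{3}$, so $3$ divides neither. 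As every two vertices inside a copy are adjacent and $\ell(w)=1$ is coprime to all, the adjacent-vertex condition is satisfied. The labels not used on vertices are the $2m$ small values $\{6i-4,6i : 1\le i\le m\}$ (note this includes $2=6\cdot 1-4$) and the $8m$ large values $6m+2,\ldots,14m+1$ — exactly $10m$ labels for the $10m$ edges.

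For the edges, in each copy I would single out the Hamiltonian cycle $w,v_{i,1},v_{i,2},v_{i,3},v_{i,4},w$ (five of that copy's ten edges) and label the $5m$ edges of these $m$ cycles consecutively with $6m+2,6m+3,\ldots,11m+1$, block by block in the order of $i$, so that copy $i$'s cycle receives $6m+5i-3$ through $6m+5i+1$. Each $v_{i,j}$ then lies between two cycle edges that are consecutive in this labeling, so the gcd of its incident edge labels is $1$; and the last cycle edge of copy $i$ and the first cycle edge of copy $i+1$ are both incident to $w$ and get consecutive labels, which — using $m\ge 2$ — handles $w$. The remaining $5m$ edges, namely $wv_{i,2},wv_{i,3},v_{i,1}v_{i,3},v_{i,1}v_{i,4},v_{i,2}v_{i,4}$ over all $i$, receive the $5m$ leftover labels in any order, which cannot affect any gcd since every vertex already has a consecutive incident pair. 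A final line checks that all labels are distinct and lie in $\{1,\ldots,14m+1\}$, and the degenerate case $m=1$ is just $K_5$, already total prime as a complete graph.

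I expect the main (really the sole non-routine) obstacle to be isolating the block-of-six vertex pattern and checking its pairwise coprimality uniformly in $i$; once that is done, the edge construction is a direct transcription of the closed-trail argument used for $K_4^{(m)}$, with Hamiltonian $5$-cycles of each copy playing the role of the $4$-cycles there.
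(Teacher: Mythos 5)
Your proposal is correct and follows essentially the same construction as the paper: the identical vertex labeling $\ell(w)=1$ and $\{6i-3,6i-2,6i-1,6i+1\}$ for the $i$th copy, the same pairwise-coprimality check, and the same closed trail through the $m$ Hamiltonian $5$-cycles giving every vertex a pair of consecutively labeled incident edges. The only (immaterial) difference is that you assign the trail edges the labels $6m+2,\ldots,11m+1$ while the paper uses $9m+2,\ldots,14m+1$; both leave exactly the right leftover labels for the remaining edges.
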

\begin{proof}
    The graph $K_5^{(m)}$ has $4m+1$ vertices and $10m$ edges. Let $w$ be the center vertex and $v_{i,j}$ for $1\leq i\leq m$ and $1\leq j\leq 4$ be the four remaining vertices in each of the $m$ copies of $K_5$.

    Define $\ell: V\cup E\rightarrow \{1,2,\ldots, 14m+1\}$ where we first define the vertex label $\ell(w)=1$. Then for $i=1,2,\ldots, m$, let $\ell(v_{i,1})=6i-3$, $\ell(v_{i,2})=6i-2$, $\ell(v_{i,3})=6i-1$, and $\ell(v_{i,4})=6i+1$.   

    We label the edges in order along the closed trail $$w,v_{1,1}, v_{1,2}, v_{1,3},v_{1,4}, w, v_{2,1}, \ldots, w, v_{m,1}, v_{m,2}, v_{m,3},v_{m,4}, w$$ using the $5m$ labels from $9m+2$ to $14m+1$. Note the largest vertex label was $6m+1$, making~$\ell$ injective thus far. The remaining unused labels can then be assigned arbitrarily to the other edges.

    To confirm all adjacent vertices have relatively prime labels, we first notice this is true for all pairs that include $\ell(w)=1$. Pairs of the form $v_{i,1},v_{i,2}$ and $v_{i,2},v_{i,3}$ have consecutive labels, while pairs $v_{i,1},v_{i,3}$ and $v_{i,3},v_{i,4}$ are labeled by consecutive odd integers. Finally, $v_{i,1}$ and $v_{i,4}$ have odd labels that differ by 4, while the labels $\ell(v_{i,2})$ and $\ell(v_{i,4})$ have a difference of 3 with neither being a multiple of 3. Thus, all adjacent pairs of vertex labels are relatively prime.

    The edges in the previously described trail pass through every vertex, including multiple times through the endpoint of $w$. This implies that all vertices have two or more incident edges that are labeled consecutively, making the gcd of the labels of their incident edges equal 1. Therefore, $\ell$ is a total prime labeling. 
\end{proof}

\begin{theorem}
    The windmill graph $K_{6}^{(m)}$ is total prime for all $m \geq 2$.
\end{theorem}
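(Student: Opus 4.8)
The plan is to imitate the constructions already used for $K_4^{(m)}$ and $K_5^{(m)}$: set $\ell(w)=1$ for the center vertex $w$, give the five non-central vertices of each copy of $K_6$ a small block of pairwise coprime labels, route a closed trail through $w$ that hits every vertex, and label that trail with the largest integers consecutively so that each vertex automatically acquires two incident edges whose labels are consecutive (hence coprime). Since $K_6^{(m)}$ has $5m+1$ vertices and $15m$ edges, the label set is $\{1,2,\dots,20m+1\}$; unlike the $K_n^{(2)}$ argument, $n=6$ is fixed, so I expect to keep all vertex labels explicitly small and avoid Bertrand/PNT bounds entirely.

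Concretely, I would set $\ell(w)=1$ and, for the $i$th copy $(1\le i\le m)$, label its vertices $v_{i,1},\dots,v_{i,5}$ with the five consecutive odd integers $12i-7,\ 12i-5,\ 12i-3,\ 12i-1,\ 12i+1$, i.e. $\ell(v_{i,j})=12i-9+2j$. These blocks are pairwise disjoint (copy $i$ ends at $12i+1$, copy $i+1$ begins at $12i+5$), none equals $1$, and all stay at most $12m+1$. Then take the closed trail $w,v_{1,1},\dots,v_{1,5},w,v_{2,1},\dots,v_{2,5},w,\dots,v_{m,1},\dots,v_{m,5},w$, which uses $6m$ edges, and label its edges consecutively with $14m+2,\dots,20m+1$; the remaining $15m-6m=9m$ edges receive the $9m$ still-unused labels in any order.

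For the verification, the adjacent-vertex condition reduces to two sub-cases: edges incident to $w$ are fine because $\ell(w)=1$, and edges inside a single copy require that $12i-7,12i-5,12i-3,12i-1,12i+1$ be pairwise coprime. The efficient way to see this is that all five labels are odd and every pairwise difference lies in $\{2,4,6,8\}$, each of which is $3$-smooth, so the gcd of any two is a power of $3$; since $12i-7\equiv 2\pmod 3$, exactly one label in the block (the middle one, $12i-3$) is divisible by $3$, so no two share a factor of $3$ and the gcd is $1$. For the incident-edge condition, each $v_{i,j}$ lies on two consecutive trail edges with consecutive labels, and $w$ lies on the trail edges $v_{i,5}w$ and $wv_{i+1,1}$, which are consecutive in the trail (this pair exists because $m\ge 2$), hence carry consecutive labels. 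A final bookkeeping check gives that $\ell$ is a bijection: the vertex labels lie in $\{1,\dots,12m+1\}$, disjoint from the trail labels $\{14m+2,\dots,20m+1\}$, and $(5m+1)+6m+9m=20m+1$.

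The one genuinely delicate step is the choice of the per-copy vertex block: we need five pairwise coprime integers that pack tightly into an initial segment of the label set, uniformly in $i$, so that distinct copies never collide and no label exceeds $14m+1$. The offset $12i-7$ is engineered precisely so that (i) consecutive blocks are $12$ apart while a block spans only $8$, so they are disjoint, and (ii) all pairwise differences within a block are $3$-smooth while exactly one label per block is a multiple of $3$ — which is what makes pairwise coprimality hold automatically with no dependence on $i$. Once that block is in hand, the remainder of the argument is the same routine as in the $K_4^{(m)}$ and $K_5^{(m)}$ proofs.
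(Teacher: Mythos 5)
Your proposal is correct and follows essentially the same strategy as the paper: center labeled $1$, a small block of pairwise coprime labels for each copy of $K_6$, and a closed trail through $w$ labeled consecutively with the top $6m$ values $14m+2,\ldots,20m+1$. The only difference is your choice of vertex blocks --- five consecutive odd integers $12i-7,\ldots,12i+1$ in windows of width $12$, which is uniform in $i$ and makes the pairwise coprimality argument immediate --- whereas the paper packs its five labels into windows of width $10$ (largest vertex label $10m+1$ rather than your $12m+1$) at the cost of a three-case analysis depending on which candidate label is divisible by $3$; both choices sit comfortably below the threshold $14m+2$, so the two constructions are interchangeable.
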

\begin{proof}
    The graph $K_6^{(m)}$ has $5m + 1$ vertices and $15m$ edges. Let $w$ be the center vertex and $v_{i,j}$ for $1 \leq i \leq m$ and $1 \leq j \leq 5$ be the remaining five vertices of the $i$th copy of $K_6$.

    We define $\ell : V \cup E \rightarrow \{1, 2, \ldots, 20m + 1\}$, where the first vertex label is $\ell(w) = 1$. For each copy of $K_6$, we label the remaining five vertices according to the following distinct cases:
    \begin{itemize}
        \item If $3 \;|\; (10i - 7),$ assign labels $10i - 5$, $10i - 3$, $10i - 2$, $10i - 1$, and $10i + 1$.
        \item If $3 \;|\; (10i - 5),$ assign labels $10i - 7$, $10i - 4$, $10i - 3$, $10i - 1$, and $10i + 1$.
        \item If $3 \;|\; (10i - 3),$ assign labels $10i - 7$, $10i - 5$, $10i - 4$, $10i - 1$, and $10i + 1$.
    \end{itemize}
    Note that the largest vertex label of the graph is $10m + 1$.

    We assign edge labels consecutively along the trail $$w, v_{1, 1}, v_{1, 2}, v_{1, 3}, v_{1, 4}, v_{1, 5}, w, v_{2, 1}, \ldots, w,  v_{m, 1}, v_{m, 2}, v_{m, 3}, v_{m, 4}, v_{m, 5}, w$$ using the labels $14m + 2$ through $20m + 1$. Because the largest vertex label is $10m + 1$ and the smallest edge label is $14m + 2$, the unassigned values left in $\{1, 2, \ldots, 20m + 1\}$ can be assigned to the remaining edges while ensuring that $\ell$ is an injection.

    The vertex $w$ is labeled as $1$, so the labels of any pair of adjacent vertices that include $w$ are relatively prime. In each copy of $K_6$, the remaining five vertices are labeled with one of the three sets of values listed above. Within each case, all labels have differences with prime factors of 2, 3, or 5. Each set contains exactly one even label and at most one label divisible by 5, namely $10i+5$, so no pair shares a common factor of 2 or 5. Also, the assumption for the cases of which value is a multiple of 3 guarantees that at most one label in each $K_6$ has a factor of 3. Finally, no two labels differ by 7 or by more than 8, so no pair of labels can have a common prime factor of at least 7.

    As in the last windmill graph proof, the closed trail that has consecutive labels on at least two incident edges for each vertex implies that $\ell$ is a total prime labeling of $K_6^{(m)}$.
\end{proof}

A \textit{coprime labeling} of a graph $G$ of order $n$ is an injective vertex labeling using integers from $\{1, 2, \ldots, k\}$, for some $k \geq n$, in which labels of adjacent vertices are relatively prime. The smallest integer $k$ for which a coprime labeling of $G$ is possible is called the \textit{minimum coprime number}, denoted $\pr(G)$. A coprime labeling with a maximum label of $\pr(G)$ is a \textit{minimum coprime labeling}~\cite{BDH}.

Minimum coprime numbers have been determined for certain cases of graphs that are known to not be prime, such as prisms~\cite{AF2} and powers of paths and cycles~\cite{AF}. This concept is useful for developing total prime labelings, as a coprime labeling can be extended to include edges. In particular, if the minimum coprime number of a Hamiltonian graph $G$ is sufficiently small relative to the size of $G$, then a total prime labeling of $G$ can be constructed.

\begin{theorem}\label{HamiltonianMCL}
    Let $G$ be a Hamiltonian graph with $|V|=n$, $|E|=m$, and $\Delta\geq 3$. If $\pr(G)\leq m-1$, then $G$ is total prime.
\end{theorem}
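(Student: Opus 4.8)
The plan is to mimic the proof of Theorem~\ref{Hamiltonian}, replacing the prime labeling of the vertices by a minimum coprime labeling. The only genuinely new point is that a coprime labeling of $G$ occupies just some size-$n$ subset $S$ of $\{1,2,\ldots,\pr(G)\}$, not all of $\{1,2,\ldots,n\}$, so I cannot blindly recycle the labels $n+1,\ldots,2n+1$ for the distinguished cycle-and-chord edges as before: they might collide with vertex labels. The hypothesis $\pr(G)\leq m-1$ is exactly what lets me sidestep this.

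First I would record two easy facts. Since a coprime labeling is an injection into $\{1,\ldots,k\}$ with $k\geq n$, we have $\pr(G)\geq n$, and with $\pr(G)\leq m-1$ this forces $m\geq n+1$. Also, since $G$ is Hamiltonian with $\Delta\geq 3$, it admits (exactly as in Theorem~\ref{Hamiltonian}) a Hamiltonian cycle $(v_1,v_2,\ldots,v_n,v_1)$ with a chord $v_1v_k$ for some $2<k<n$. Now define $\ell\colon V\cup E\to\{1,\ldots,m+n\}$ by $\ell(v)=g(v)$ on vertices, where $g$ is a minimum coprime labeling of $G$. Every vertex label is then at most $\pr(G)\leq m-1$, so the $n+1$ largest labels $m,m+1,\ldots,m+n$ are all unused by vertices. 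I would assign this top block to the cycle-and-chord edges using the shifted Theorem~\ref{cyclechord} pattern: $\ell(v_iv_{i+1})=m+i-1$ for $i=1,\ldots,n-1$, then $\ell(v_nv_1)=m+n-1$ and $\ell(v_1v_k)=m+n$. The remaining $m-n-1\geq 0$ edges get the $m-n-1$ still-unused labels of $\{1,\ldots,m-1\}$ in any order, so $\ell$ is a bijection.

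To finish, I would verify the two defining conditions. Adjacent vertices receive relatively prime labels because $g$ is a coprime labeling. For the incident-edge condition, I would note that, just as in the proof of Theorem~\ref{cyclechord}, every vertex has two incident edges among the cycle-and-chord edges whose labels are consecutive integers: for $v_i$ with $i\neq 1,k$ the two cycle edges meeting at $v_i$; for $v_1$ the edges $v_nv_1$ and $v_1v_k$, labeled $m+n-1$ and $m+n$; and for $v_k$ the edges $v_{k-1}v_k$ and $v_kv_{k+1}$, labeled $m+k-2$ and $m+k-1$. Hence the $\gcd$ of the labels of the edges incident to any vertex equals $1$, no matter how the leftover edges were labeled, so $\ell$ is a total prime labeling.

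I do not expect any real obstacle; the whole content is the bookkeeping around the gaps a coprime labeling leaves in $\{1,\ldots,\pr(G)\}$. The assumption $\pr(G)\leq m-1$ does two jobs at once: it places all vertex labels strictly below $m$, so the distinguished edges can safely take the top block $\{m,\ldots,m+n\}$, and (together with $\pr(G)\geq n$) it ensures $m\geq n+1$, so that block fits alongside the $n$ vertex labels inside $\{1,\ldots,m+n\}$.
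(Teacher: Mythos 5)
Your proof is correct and follows essentially the same approach as the paper: extend a minimum coprime labeling of the vertices, give the Hamiltonian cycle-and-chord edges a consecutive block of large labels, and let the hypothesis $\pr(G)\leq m-1$ guarantee no collision with vertex labels. The only cosmetic difference is that you place that block at the very top, $\{m,\ldots,m+n\}$, while the paper uses $\{\pr(G)+1,\ldots,\pr(G)+n+1\}$; both work for the same reason.
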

\begin{proof}
    Let $f:V\rightarrow \{1,2,\ldots, \pr(G)\}$ be a minimum coprime labeling of $G$. Following a technique similar to the process in Theorem~\ref{Hamiltonian}, we extend $f$ to a total prime labeling $\ell: V \cup E \rightarrow \{1, 2, \ldots, m+n\}$ by first assigning $\ell(v)=f(v)$ for all $v\in V$. For the $n+1$ edges in the Hamiltonian cycle and chord that exist since $\Delta\geq 3$, we assign the labels $\pr(G)+1, \pr(G)+2,\ldots, \pr(G)+n+1$. By the assumption that $\pr(G)\leq m-1$, $\pr(G)+n+1\leq m-1+n+1=m+n$, so the largest edge label is in the labeling set $\{1,2,\ldots, m+n\}$. Arbitrarily assign any unused labels to the remaining edges. Since $f$ is a coprime labeling and every vertex has incident edges labeled consecutively by the cycle and chord, $\ell$ is a total prime labeling.
\end{proof}

The previous theorem applies to many classes of non-prime graphs to show they are total prime. In the following corollary, we focus first on stacked prisms $Y_{m,n}$, which are a generalization of the prism graph formed by the Cartesian product $C_m \square P_n$. We show that they are total prime when $m = 3$ and $m = 5$, while later investigating the $m=4$ case. 

We also consider powers of path and cycle graphs. For a graph $G$ and a positive integer $k$, the $k$th power of $G$ is obtained by adding edges between all pairs of vertices whose distance in~$G$ is at most $k$. We particularly focus on squares and cubes of paths and cycles, denoted $P_n^2$, $P_n^3$, $C_n^2$, and $C_n^3$, and an example of the squared path $P_8^2$ is shown with a total prime labeling in Figure~\ref{pathsquaredgraph}.

\begin{figure}[h]
    \centering
\includegraphics[scale=0.4]{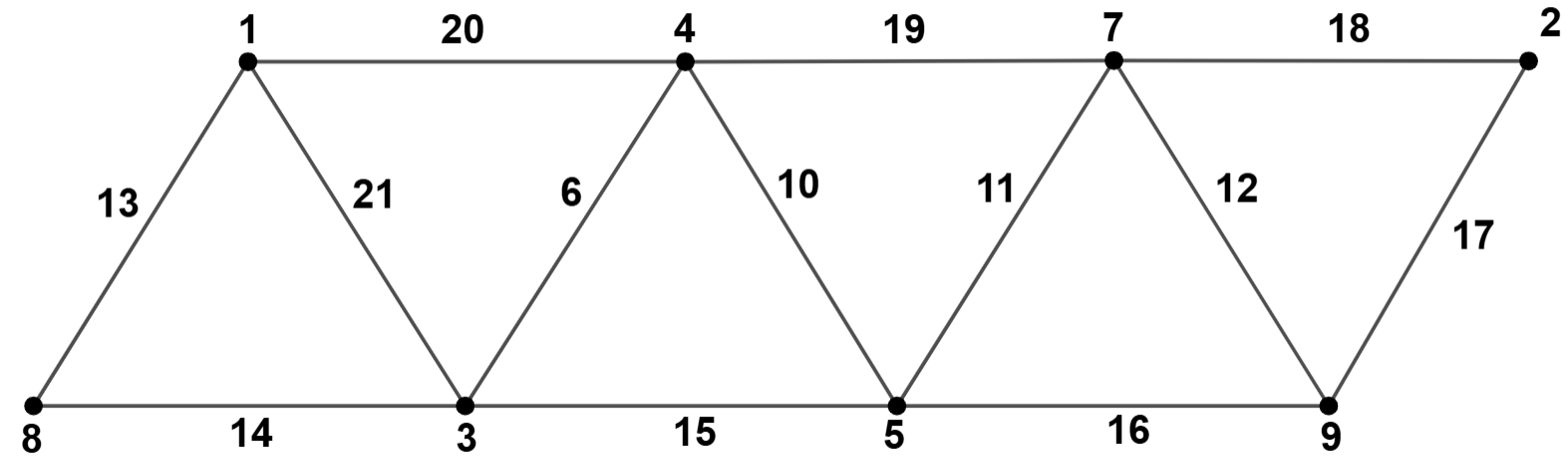}
    \caption{The squared path $P_8^2$ with a total prime labeling}
    \label{pathsquaredgraph}
\end{figure}

\begin{corollary}
The following classes of graphs are total prime: 
    \begin{enumerate}[(a)]
        \item Stacked triangular prisms, $Y_{3,n}$, for $n\geq 2$
        \item Stacked pentagonal prisms, $Y_{5,n}$, for $n\geq 2$
        \item The square of a path, $P_n^2$, for $n\geq 4$
        \item The cube of a path, $P_n^3$, for $n\geq 5$
        \item The square of a cycle, $C_n^2$, for $n\geq 6$ 
        \item The cube of a cycle, $C_n^3$, for $n\geq 8$
    \end{enumerate}
\end{corollary}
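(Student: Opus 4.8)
The plan is to apply Theorem~\ref{HamiltonianMCL} to each of the six families. For a graph $G$ in any one of these classes, with $m=|E|$ edges, it suffices to verify three things: that $G$ is Hamiltonian, that $\Delta(G)\geq 3$, and that $\pr(G)\leq m-1$; the conclusion is then immediate from the theorem.

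First I would record the relevant parameters. The stacked prism $Y_{k,n}=C_k\square P_n$ has $kn$ vertices and $k(2n-1)$ edges; the path power $P_n^t$ has $n$ vertices and $(n-1)+(n-2)+\cdots+(n-t)$ edges; and the cycle power $C_n^t$ has $n$ vertices and $tn$ edges. In the stated ranges $\Delta(G)\geq 3$ holds: stacked prisms have $\Delta=4$, or $\Delta=3$ when $n=2$, while the interior vertices of a power graph have degree $2t\geq 4$ and the extreme vertices of $P_n^t$ have degree at least $t\geq 2$, so only the smallest permitted values of $n$ require a direct check.

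Second, Hamiltonicity. Each $C_n^t$ contains the spanning cycle $C_n$, and each $P_n^t$ contains $P_n^2$, which has a Hamiltonian cycle obtained by listing the odd-indexed vertices in increasing order, then the even-indexed vertices in decreasing order, and returning to the start, since every consecutive pair then differs in index by $1$ or $2$. The Cartesian product $C_k\square P_n$ carries a boustrophedon Hamiltonian cycle that snakes through positions $1,\ldots,k-1$ across all copies of $C_k$ and then returns along position $0$, hopping over via a single edge of the last $C_k$ (positions $1$ and $k-1$ are both adjacent to position $0$). Hence all six families are Hamiltonian.

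Third, and this is the crux, I would invoke the known minimum coprime numbers: $\pr(Y_{3,n})$ and $\pr(Y_{5,n})$ from~\cite{AF2}, and $\pr(P_n^2)$, $\pr(P_n^3)$, $\pr(C_n^2)$, $\pr(C_n^3)$ from~\cite{AF}. In every family $\pr(G)$ grows linearly in $n$ with a leading coefficient strictly smaller than that of the edge count $m$ (for instance roughly $4n$ versus $6n$ for $Y_{3,n}$, and roughly $\tfrac{4n}{3}$ versus $2n$ for $P_n^2$), so $\pr(G)\leq m-1$ holds for all sufficiently large $n$. The only genuine work is to confirm this inequality at the smallest values allowed by each hypothesis, such as $Y_{3,2}$ (where $\pr=7\leq 8=m-1$), $P_4^2$ (where $\pr=4\leq 4=m-1$), and likewise $P_5^3$, $C_6^2$, and $C_8^3$; this is done by substituting the exact formulas of~\cite{AF,AF2} into $\pr(G)\leq m-1$ and checking the finitely many residual small graphs by hand. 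The main obstacle is therefore essentially bookkeeping: matching the precise, piecewise formulas for $\pr$ in the literature against the edge counts in the regime just above the stated thresholds on $n$, since the asymptotic comparison alone does not settle those few smallest graphs.
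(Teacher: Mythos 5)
Your proposal is correct and follows essentially the same route as the paper: verify Hamiltonicity and $\Delta\geq 3$, then apply Theorem~\ref{HamiltonianMCL} by checking $\pr(G)\leq |E|-1$ against the piecewise formulas of~\cite{AF,AF2}, with the small cases (such as $P_4^2$, $P_5^2$, $P_7^2$, $P_5^3$, where the published $\pr$ formulas do not apply because those graphs are prime) handled separately. The only cosmetic difference is that the paper routes those prime small cases through Theorem~\ref{Hamiltonian} rather than substituting $\pr(G)=|V|$ into Theorem~\ref{HamiltonianMCL}, which is equivalent here.
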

\begin{proof}
    One can verify for each of the six graphs that they are Hamiltonian and $\Delta\geq 3$. Then to apply Theorem~\ref{HamiltonianMCL}, we need to show $\pr(G)\leq |E|-1$ in each case.
    \begin{enumerate}[(a)]
        \item The stacked triangular prism $Y_{3, n}$ has $6n - 3$ edges. In \cite{AF2}, the minimum coprime number is given by $\pr(Y_{3, n}) = 4n - 1$. Then we have $$4n - 1 \leq (6n - 3) - 1 \text{ for all } n\geq 2.$$

        \item The stacked pentagonal prism $Y_{5, n}$ has $10n - 5$ edges. The minimum coprime number is given in \cite{AF2} as $\pr(Y_{5, n}) = 6n - 1$. This implies $$6n - 1 \leq (10n - 5) - 1 \text{ for all } n\geq 2.$$

        \item The square of a path $P_n^2$ has $2n - 3$ edges and a minimum coprime number given in~\cite{AF}~as \[
        \pr(P_n^2) = \begin{cases}
            4k - 1, & \text{if } n = 3k \text{ or } n = 3k + 1 \\
            4k + 1, & \text{if } n = 3k + 2
        \end{cases}
        \]
        for $n = 6$ or $n \geq 8$. Note that when $n = 4, 5, \text{and } 7$, $P_n^2$ is prime. Then by Theorem~\ref{Hamiltonian}, $P_n^2$ is total prime in these cases. For $n=6$ and $n \geq 8$,
        \[
        \begin{aligned}
            n = 3k: &\;\;\; 4k - 1 \leq (2(3k) - 3) - 1 \text{ for } k \geq 2 \text{ or } n \geq 6, \\
            n = 3k + 1: &\;\;\; 4k - 1 \leq (2(3k + 1) - 3) - 1 \text{ for } k \geq 1 \text{ or } n \geq 4,\\
            n = 3k + 2: &\;\;\; 4k + 1 \leq (2(3k + 2) - 3) - 1 \text{ for } k \geq 1 \text{ or } n \geq 5.
        \end{aligned}
        \]
       
        \item The cube of a path $P_n^3$ has $3n - 6$ edges. The minimum coprime number is \[
        \pr(P_n^3) = \begin{cases}
            6k - 1, & \text{if } n = 4k \text{ or } n = 4k + 1 \\
            6k + 1, & \text{if } n = 4k + 2 \\
            6k + 3, & \text{if } n = 4k + 3
        \end{cases}
        \]
        for $n \geq 6$ \cite{AF}. Note that when $n = 5$, $P_n^3$ is prime, and thus total prime by Theorem \ref{Hamiltonian}. For $n \geq 6$, we have
        \[
        \begin{aligned}
            n = 4k: &\;\;\; 6k - 1 \leq (3(4k) - 6) - 1 \text{ for } k \geq 1 \text{ or } n \geq 4, \\
            n = 4k + 1: &\;\;\; 6k - 1 \leq (3(4k + 1) - 6) - 1 \text{ for } k \geq 1 \text{ or } n \geq 5,\\
            n = 4k + 2: &\;\;\; 6k + 1 \leq (3(4k + 2) - 6) - 1 \text{ for } k \geq 1 \text{ or } n \geq 6, \\
            n = 4k + 3: &\;\;\; 6k + 3 \leq (3(4k + 3) - 6) - 1 \text{ for } k \geq 1 \text{ or } n \geq 7.
        \end{aligned}
        \]

        \item The square of a cycle $C_n^2$ has $2n$ edges. Its minimum coprime number is \[
        \pr(C_n^2) = \begin{cases}
            4k - 1, & \text{if } n = 3k\\
            4k + 1, & \text{if } n = 3k + 1 \\
            4k + 3, & \text{if } n = 3k + 2
        \end{cases}
        \]
        for $n \geq 4$~\cite{AF}. For $n \geq 6$, we obtain
        \[
        \begin{aligned}
            n = 3k: &\;\;\; 4k - 1 \leq 2(3k) - 1 \text{ for } k \geq 0 \text{ or } n \geq 0, \\
            n = 3k + 1: &\;\;\; 4k + 1 \leq 2(3k + 1) - 1 \text{ for } k \geq 0 \text{ or } n \geq 1,\\
            n = 3k + 2: &\;\;\; 4k + 3 \leq 2(3k + 2) - 1 \text{ for } k \geq 0 \text{ or } n \geq 2.
        \end{aligned}
        \]
        
        \item The cube of a cycle $C_n^3$ has $3n$ edges and minimum coprime number \[
        \pr(C_n^3) = \begin{cases}
            6k - 1, & \text{if } n = 4k \\
            6k + 1, & \text{if } n = 4k + 1 \\
            6k + 5, & \text{if } n = 4k + 2 \\
            6k + 7, & \text{if } n = 4k + 3
        \end{cases}
        \]
        for $n \geq 8$~\cite{AF}. This results in the following:
        \[
        \begin{aligned}
            n = 4k: &\;\;\; 6k - 1 \leq 3(4k) - 1 \text{ for } k \geq 0 \text{ or } n \geq 0, \\
            n = 4k + 1: &\;\;\; 6k + 1 \leq 3(4k + 1) - 1 \text{ for } k \geq 0 \text{ or } n \geq 1,\\
            n = 4k + 2: &\;\;\; 6k + 5 \leq 3(4k + 2) - 1 \text{ for } k \geq 0 \text{ or } n \geq 2, \\
            n = 4k + 3: &\;\;\; 6k + 7 \leq 3(4k + 3) - 1 \text{ for } k \geq 0 \text{ or } n \geq 3.
        \end{aligned}
        \]
    \end{enumerate}
    Each graph satisfies the necessary inequality involving its minimum coprime number and the number of edges, so by Theorem~\ref{HamiltonianMCL}, these six graphs are total prime for their specified order~$n$.
\end{proof}

\section{Other Classes of Graphs}\label{sec: other}

A \textit{prism graph} is the result of the Cartesian product of a path on two vertices and a cycle, denoted as $P_2\square C_n$, and it visually forms the edges and corners of a polygonal prism. It is sometimes denoted as $GP(n,1)$ since it is a special case of a generalized Petersen graph. The prism is conjectured to be prime for all even $n$, which was verified for all $n\leq 2500$~\cite{HLYZ}, although $P_2\square C_n$ is never prime for odd $n$~\cite{PG}. We prove that the prism graph is always total prime for $n$ of either parity. An example of $P_2\square C_{12}$ with a total prime labeling is seen in Figure~\ref{prismgraph}.

\begin{figure}[h]
    \centering
\includegraphics[scale=1.2]{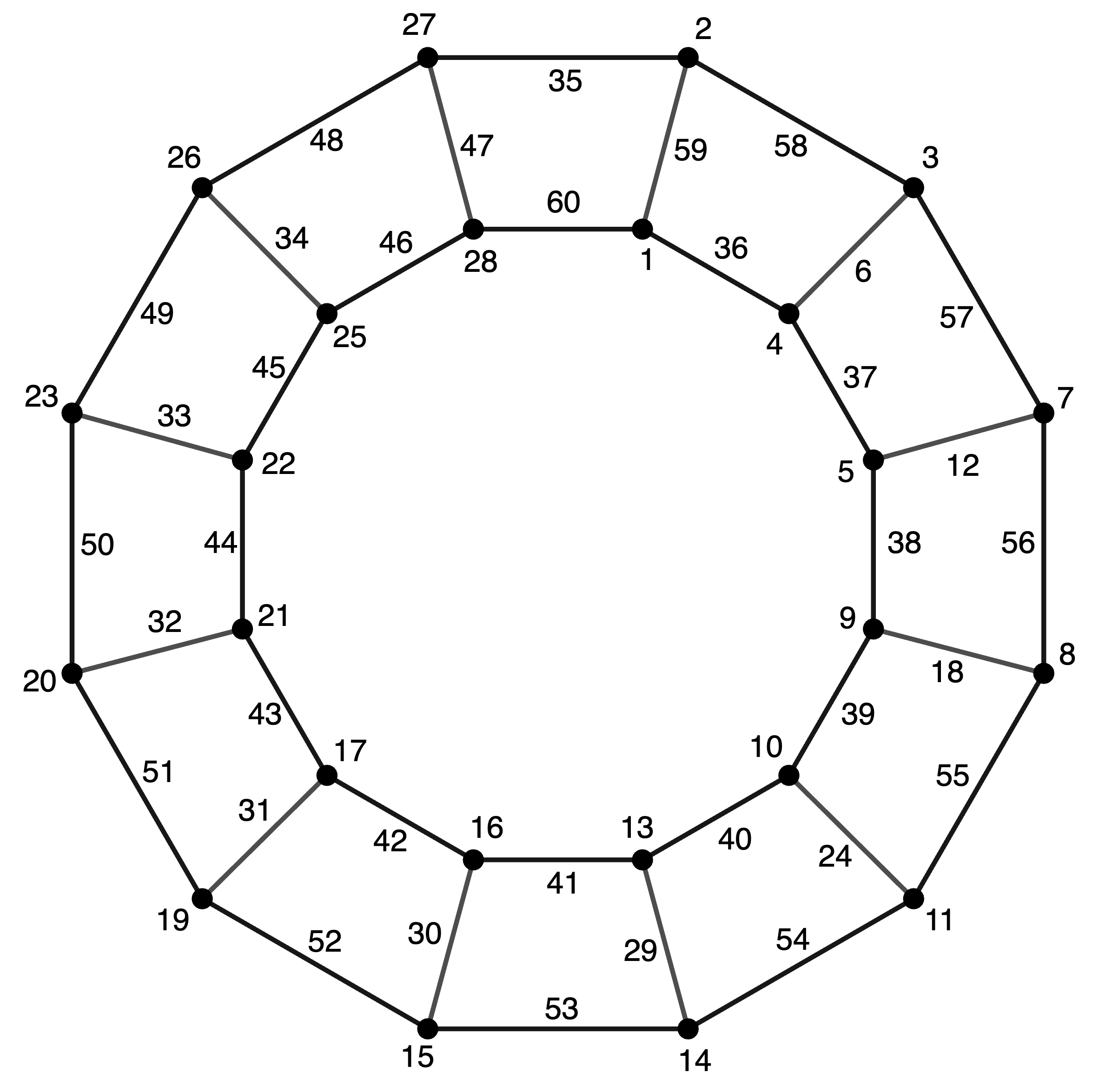}
    \caption{The prism $P_2\square C_{12}$ with a total prime labeling}
    \label{prismgraph}
\end{figure}

\begin{theorem}
    The prism graph $P_2\square C_n$ is total prime for all $n\geq 3$.
\end{theorem}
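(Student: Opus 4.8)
The plan is to follow the template of Theorems~\ref{Hamiltonian} and~\ref{HamiltonianMCL}, but with a hand-built vertex labeling rather than an imported prime or minimum coprime labeling, since $P_2\square C_n$ is not prime for odd $n$ and is only conjecturally prime for even $n$. Write the inner cycle as $u_1,\ldots,u_n$, the outer cycle as $v_1,\ldots,v_n$, and the connecting rungs as $u_iv_i$, so that $|V|=2n$, $|E|=3n$, every vertex has degree $3$, and a total prime labeling must use $\{1,2,\ldots,5n\}$. First I would dispose of the incident-edge condition: the prism is Hamiltonian via the cycle $u_1,u_2,\ldots,u_n,v_n,v_{n-1},\ldots,v_1,u_1$, and labeling its $2n$ edges consecutively with the block $\{2n+1,2n+2,\ldots,4n\}$ makes the two cycle-edges at each vertex carry consecutive integers -- with the sole exception of $u_1$, whose two cycle-edges get $2n+1$ and $4n$, where $\gcd(2n+1,4n)=\gcd(2n+1,2)=1$. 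Hence $\gcd(\ell(e)\mid e\ni w)=1$ at every vertex $w$ no matter how the remaining $n$ edges are filled in, so the whole problem reduces to choosing an injective vertex labeling, drawn from $\{1,\ldots,2n\}\cup\{4n+1,\ldots,5n\}$ to avoid the reserved block, under which adjacent vertices are coprime; the $n$ leftover labels then go arbitrarily on the $n$ non-cycle edges.

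For the vertex labeling I would set $\ell(u_i)=i$, which makes every inner-cycle adjacency and the wrap-around pair $(\ell(u_n),\ell(u_1))=(n,1)$ coprime, and then place the outer labels $\ell(v_1),\ldots,\ell(v_n)$ so that consecutive outer labels are coprime and each rung pair $(i,\ell(v_i))$ is coprime. The naive choice $\ell(v_i)=n+i$ already works except at two kinds of places: the outer wrap $\gcd(\ell(v_n),\ell(v_1))=\gcd(2n,n+1)$, a problem exactly when $n$ is odd, and the rungs with $\gcd(i,n)>1$. So I would instead permute the small labels $n+1,\ldots,2n$ among the $v_i$ according to a case analysis on $n$ modulo a small modulus (to control the primes $2$, $3$, and $5$), and at the handful of remaining positions where no small label is available, substitute a large prime from the interval $(4n,5n]$; the existence of enough such primes is guaranteed by Inequality~(\ref{pi of x}) together with Bertrand-type reasoning, exactly as in the proofs for books and complete graphs. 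Since only a controlled number of large-prime substitutions is needed, the vertex labels use far fewer than $n$ of the labels in $(4n,5n]$, so there is ample room and injectivity is maintained.

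The main obstacle will be the odd-$n$ case. For even $n$ the prism is bipartite with balanced sides of size $n$, so the even labels in $\{1,\ldots,2n\}$ sit comfortably on one side and only small-prime coincidences among the odd labels need repair; but for odd $n$ the graph is not bipartite, its independence number is only $n-1$, and consequently at least one even label must be displaced onto an edge and at least one vertex must carry a label exceeding $2n$. Constructing an explicit outer-cycle order for odd $n$ that simultaneously (i) satisfies every rung constraint $\gcd(i,\ell(v_i))=1$, (ii) keeps consecutive outer labels coprime including across the wrap, and (iii) confines the parity and small-prime obstructions to a bounded set of large-prime repairs, and then verifying that the resulting formulas hold uniformly over all residues of $n$, is where the bulk of the work lies. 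Once this vertex labeling is established, combining it with the Hamiltonian-cycle edge labels and an arbitrary assignment of the $n$ remaining labels to the $n$ remaining edges yields a total prime labeling of $P_2\square C_n$ for every $n\ge 3$.
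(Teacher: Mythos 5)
Your reduction of the edge condition is correct and slightly cleaner than the paper's: labeling the $2n$ edges of the Hamiltonian cycle $u_1,\ldots,u_n,v_n,\ldots,v_1,u_1$ consecutively with $2n+1,\ldots,4n$ does give every vertex two consecutively labeled incident edges except $u_1$, where $\gcd(2n+1,4n)=1$, so no chord is needed. The problem is that everything after that is a plan rather than a proof. The entire substance of the theorem is the construction of an injective vertex labeling with small maximum label under which adjacent vertices are coprime, and you explicitly defer it: ``permute the small labels \ldots according to a case analysis on $n$ modulo a small modulus,'' ``substitute a large prime \ldots at the handful of remaining positions,'' and ``this is where the bulk of the work lies.'' None of these steps is carried out or shown to be feasible. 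In particular, with $\ell(u_i)=i$ the rung constraints $\gcd(i,\ell(v_i))=1$ interact with the two outer-cycle adjacency constraints at each $v_i$ to form a cyclic constraint-satisfaction problem, and you give no argument that it is solvable using only $O(1)$ large-prime repairs uniformly in $n$; the claim that ``only a controlled number of substitutions is needed'' is asserted, not established. As written, the proof has a genuine gap exactly at its core.

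For comparison, the paper sidesteps this difficulty by \emph{not} putting $1,\ldots,n$ on the inner cycle. It interleaves the labels between the two cycles in a periodic pattern of period $5$ (initial block $1,4,5,9,10$ on the $u_i$ and $2,3,7,8,11$ on the $v_i$), shifting each subsequent block by $12$. Every adjacent pair in the base block is coprime with difference at most $4$, and adding a common multiple of $12$ cannot introduce a factor of $2$ or $3$ nor any prime factor exceeding the difference, so coprimality propagates automatically to all $n$; only the two wrap-around edges $u_nu_1$ and $v_nv_1$ need a short case analysis (a swap of four labels in two of the five residue classes of $n$ modulo $5$). This yields a maximum vertex label of roughly $12n/5$, comfortably below the edge block, with no appeal to the distribution of primes. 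If you want to salvage your approach, you would need to exhibit the outer-cycle permutation explicitly for each residue class of $n$ and verify the wrap, the rungs, and the injectivity count of large-prime substitutions — at which point you will essentially have rebuilt a construction of the paper's type.
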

\begin{proof}
    This graph consists of $2n$ vertices from two $n$-cycles $(u_1,u_2,\ldots, u_n)$ and $(v_1,v_2,\ldots, v_n)$. There are $3n$ edges of the form $u_iu_{i+1}$ and $v_iv_{i+1}$ for $i=1,2,\ldots, n-1$; $u_nu_1$; $v_nv_1$; and $u_iv_i$ for $i=1,2,\ldots, n$. We will create a labeling $\ell: V\cup E\rightarrow \{1,2,\ldots, 5n\}$.
    
    The vertex labels are defined recursively, beginning with the following initial labels:
\begin{itemize}
    \item $\ell(u_1)=1$, $\ell(u_2)=4$, $\ell(u_3)=5$, $\ell(u_4)=9$, $\ell(u_5)=10$
    \item $\ell(v_1)=2$, $\ell(v_2)=3$, $\ell(v_3)=7$, $\ell(v_4)=8$, $\ell(v_5)=11$.
\end{itemize}
For $i> 5$, we define the labels for $u_i$ and $v_i$ by first writing the index in the form $i=5k+j$ for some $k\geq 1$ and $1\leq j\leq 5$. Then assign $\ell(u_i)=12k+\ell(u_j)$, and likewise, $\ell(v_i)=12k+\ell(v_j)$. Essentially, we are assigning each block of five $u_i$ and $v_i$ vertices labels that are shifted by 12 from the corresponding vertices in the previous block. Note in particular that $\ell(u_6)=13$ and $\ell(v_6)=14$.

    There are fifteen edges of the form $u_iu_{i+1}, v_iv_{i+1}$, or $u_iv_i$ for $i=1,2,3,4,$ or $5$. One can observe from the first twelve vertex labels that all fifteen pairs of adjacent labels are relatively prime. Additionally, the differences between these labels are each 1, 2, 3, or 4. 

    Next we consider an adjacent pair $u_iu_{i+1}$, where $i=5k+j$ with $k\geq 1$ and $1\leq j\leq 5$. We have $\ell(u_i)=12k+\ell(u_j)$ and $\ell(u_{i+1})=12k+\ell(u_{j+1})$. Note that $\ell(u_j)$ and $\ell(u_{j+1})$ are relatively prime as one of the initial fifteen adjacencies. Adding a multiple of 12 to two relatively prime numbers cannot create a common prime factor of 2 or 3, although it is possible for the sums to now share a prime factor of 5 or larger. However, since the difference between $\ell(u_j)$ and $\ell(u_{j+1})$ was 1, 2,  3, or 4, the same is true about $|\ell(u_i)-\ell(u_{i+1})|$, so they cannot share any factors greater than 4. Thus, $\gcd(\ell(u_i),\ell(u_{i+1}))=1$. This argument for pairs $v_iv_{i+1}$ and $u_iv_i$ follows analogously.

    The only pairs of adjacent vertices that remain to consider involve the edges $u_nu_1$ and $v_nv_1$. The labels $\ell(u_n)$ and $\ell(u_1)$ are relatively prime since $\ell(u_1)=1$. If $n=5k+j$ where $j=2, 3$, or 5, then $\ell(v_n)$ is odd and therefore relatively prime with $\ell(v_1)=2$. In the cases of $j=1$ and 4, $\ell(v_n)$ is even, so we make the following swaps to the labels in the initial blocks of vertices: $\ell(v_1)=1$, $\ell(u_1)=2$, $\ell(v_2)=4$, and $\ell(u_2)=3$. Now $\ell(v_n)$ and $\ell(v_1)$ are clearly relatively prime, and since $\ell(u_n)$ is odd for $j=1$ and $4$, it has no common factors with the reassigned $\ell(u_1)$. The only new adjacencies within the first block of ten vertices are from the edges $u_2u_3$ and $v_2v_3$, but $\gcd(3,5)$ and $\gcd(4,7)$ are both 1. Thus, for every case of $j=1$ to 5, all adjacent vertex labels are relatively prime.

    For the edge labels, we follow the Hamiltonian cycle and chord approach from earlier proofs. In particular, we consecutively assign to the edges within the Hamiltonian cycle $(u_1, u_2,\ldots, u_n, v_n, v_{n-1},\ldots v_2,v_1, u_1)$ the labels $3n, 3n+1, \ldots ,5n-1$. Then assign $\ell(u_1u_n)=5n$ as the label of a chord. All vertices have incident edges labeled by consecutive integers, including the labels $5n-1$ and $5n$ for edges connecting to $u_1$. Thus, we can assign any remaining labels in $\{1,2,\ldots, 5n\}$ to the edges $v_1v_n$ and $u_iv_i$ for $i=2,3,\ldots, n-1$.

    The labeling $\ell$ satisfies the conditions for a total prime labeling as long as the vertex and edge labels are distinct. The largest vertex label is $12k+2$, $12k+4$, $12k+7$, $12k+9$ or $12k+11$ for the cases of $j=1,2,3,4,$ and $5$, respectively, where $n=5k+j$. The smallest edge label is $\ell(u_1u_2)=3n$, which results in labels of $15k+3$, $15k+6$, $15k+9$, $15k+12$, or $15k+15$. We see that all five cases satisfy $\max\{\ell(v)|v\in V\}<\min\{\ell(e)|e\in E\}$. This makes all the labels in~$\ell$ distinct, proving that it is a total prime labeling.
\end{proof}

We next consider the stacked rectangular prism, $Y_{4,n}$, which is the Cartesian product $C_4 \square P_n$. Unlike the triangular and pentagonal cases, which are not prime, $Y_{4,n}$ does have enough independent vertices for a prime labeling to be possible. However, one has not been introduced in the literature. We will prove it is total prime with Figure~\ref{RectStackedPrism} showing an example with $n=4$.

\begin{figure}[h]
    \centering
\includegraphics[scale=.85]{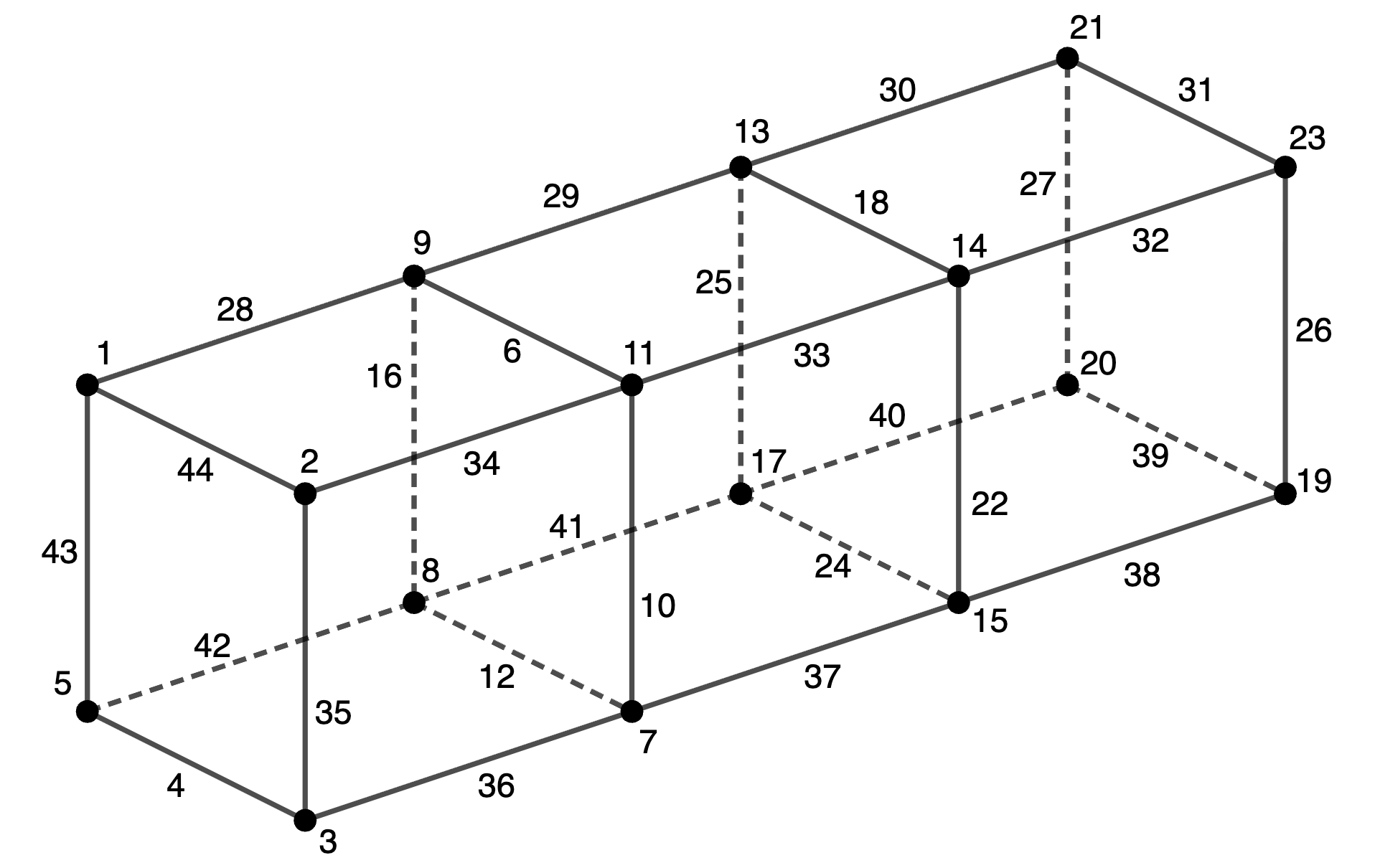}
    \caption{The stacked rectangular prism $Y_{4,4}$ with a total prime labeling}
    \label{RectStackedPrism}
\end{figure}

\begin{theorem}
    The stacked rectangular prism $Y_{4, n}$ is total prime for $n\geq 2$.
\end{theorem}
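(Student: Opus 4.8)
The plan is to construct an ordinary prime labeling of $Y_{4,n}=C_4\square P_n$ and then invoke Theorem~\ref{Hamiltonian}. The graph has $4n$ vertices and $8n-4$ edges; it is Hamiltonian (snake along the four copies of $P_n$, switching cyclic position at the first and last layers), and every vertex has degree $3$ or $4$, so $\Delta\geq 3$. Hence any bijection $f\colon V\to\{1,2,\ldots,4n\}$ for which adjacent vertices receive coprime labels automatically extends to a total prime labeling by Theorem~\ref{Hamiltonian}. (Theorem~\ref{HamiltonianMCL} would even permit a coprime labeling with largest label as big as $8n-5$, so there is considerable slack; but the tight prime labeling is the natural target.)

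I would exploit that $Y_{4,n}$ is bipartite. Write a vertex as $(i,r)$ with $i\in\{1,\ldots,n\}$ the layer and $r\in\{1,2,3,4\}$ the cyclic position, the edges being $(i,r)\sim(i,r+1)$ (indices mod $4$) and $(i,r)\sim(i+1,r)$. The classes $A=\{(i,r):i+r\text{ even}\}$ and $B=\{(i,r):i+r\text{ odd}\}$ each contain exactly $2n$ vertices, each layer contributes two vertices to each class, and the two classes alternate around every $4$-cycle. I will place the $2n$ even numbers $\{2,4,\ldots,4n\}$ on $A$ and the $2n$ odd numbers $\{1,3,\ldots,4n-1\}$ on $B$. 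Then every edge — cycle edge or vertical edge — joins an even label to an odd label, so a common factor of $2$ is impossible, and an edge can fail only when its endpoints share an odd prime $p$, i.e.\ when both labels are multiples of $p$. The problem thus collapses to distributing the evens among the $A$-slots and the odds among the $B$-slots so that no two multiples of a common odd prime are adjacent.

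I would carry this out with an explicit pattern repeating in blocks of consecutively numbered layers. The only pairs that can conflict are sparse: for a prime $p\geq 3$ they involve only the multiples of $p$, and the multiples of $3$ are the governing case — in any window of $12$ consecutive integers there are four of them, two odd and two even, that pairwise share the factor $3$; since the bipartition classes are independent, the only genuine constraint is to keep the two odd ones (which lie in one class) away from the two even ones (which lie in the other), and the same slack absorbs the sparser multiples of $5,7,\ldots$. One fixes the block pattern and runs a short case analysis on $n$ modulo a small integer, both to handle the layers $i$ with $3\mid i$ and to make consecutive blocks mesh so the vertical edges straddling a block boundary are also coprime. The verification is then routine and done in order: confirm the assignment is a bijection onto $\{1,\ldots,4n\}$; check the vertical edges $(i,r)\sim(i+1,r)$; check the cycle edges $(i,r)\sim(i,r+1)$ — in both cases only the pairs that happen to be common multiples of a small prime need individual attention, and the placement made those non-adjacent; finally apply Theorem~\ref{Hamiltonian}.

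The main obstacle is exactly the factor-$3$ conflict, which is why one cannot simply drop the four consecutive integers $4i-3,4i-2,4i-1,4i$ into layer $i$: on a $C_4$ the two even labels must lie opposite each other, which forces the difference-$3$ pair $\{4i-3,4i\}$ onto an edge, and when $3\mid i$ both of these are multiples of $3$; worse, a purely local repair fails, since returning or borrowing a label merely shifts an unavoidable conflict (a third even, or a second multiple of $3$) onto a neighboring layer. This is what motivates abandoning per-layer blocks for the global even/odd split, after which the real work — and the technical heart of the proof — is choosing, uniformly in $n$, how to spread the multiples of $3$ (and of $5$ and $7$) across a repeating block of layers so that no two that share a prime end up adjacent, block boundaries included.
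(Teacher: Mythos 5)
There is a genuine gap: your proposal is a strategy outline whose technical core is never supplied. You correctly reduce the edge-labeling to Theorem~\ref{Hamiltonian} (or \ref{HamiltonianMCL}), correctly identify the bipartition and the factor-$3$ obstruction, and correctly explain why per-layer blocks of four consecutive integers fail --- but the thing the proof actually needs, namely an explicit assignment of $\{1,\ldots,4n\}$ (or some coprime labeling) to the vertices together with a verification that no adjacent pair shares an odd prime, is deferred to ``an explicit pattern repeating in blocks'' and ``a short case analysis'' that you never exhibit. This cannot be waved through, because what you are proposing to construct is a genuine prime labeling of $Y_{4,n}$, and the paper explicitly notes that no such labeling is known in the literature; producing one would settle a separate open question, so its existence is exactly the point that requires proof, not a routine check. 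The difficulty is real: on each $4$-cycle the two even labels must sit on opposite vertices, the vertical edges couple consecutive layers in both bipartition classes, and the multiples of $3$ (two odd and two even per window of twelve labels) must be threaded through this structure uniformly in $n$, including across block boundaries --- you name this as ``the technical heart of the proof'' and then stop.

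You also undercut yourself by noting the available slack and then declining to use it. Theorem~\ref{HamiltonianMCL} only requires a coprime labeling with largest label at most $8n-5$, and the paper's proof exploits precisely this: it gives an explicit period-two pattern (labels $1,2,3,5$ and $9,11,7,8$ on the first two layers, each subsequent pair of layers shifted by $12$), whose maximum label is roughly $6n$, comfortably below $8n-5$. The shift by $12$ is the trick that dissolves your factor-$3$ problem: every adjacent pair in the base block differs by a power of $2$ or $3$, and adding a multiple of $12$ to both labels of such a pair cannot introduce a common factor of $2$ or $3$ (nor any prime exceeding the difference), so coprimality propagates to all layers with no case analysis on $n$ modulo anything. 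If you relax your target from a bijection onto $\{1,\ldots,4n\}$ to a coprime labeling bounded by $8n-5$, the construction you are missing becomes short and checkable; as written, your argument is incomplete at its decisive step.
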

\begin{proof}
     As a Cartesian product of $C_4$ and $P_n$, the stacked rectangular prism has $4n$ vertices and $8n-4$ edges. Consider each cycle as $(u_i, v_i, w_i, x_i, u_i)$ for $i=1,2,\ldots, n$, with the remaining edges from the four paths such as $u_1,u_2,\ldots, u_n$. We will create a labeling $\ell : V \cup E \rightarrow \{1, 2, \ldots, 12n-4\}$.
     
    We define the vertex labels recursively, where the base labels are as follows:
     $$\ell(u_1) = 1,\; \ell(v_1) = 2, \;\ell(w_1) = 3, \;\ell(x_1) = 5,$$
     $$\ell(u_2) = 9, \;\ell(v_2) = 11,\; \ell(w_2) = 7, \;\ell(x_2) = 8.$$
Each index $i \geq 3$ can be written as $i = 2k + j$ for some $k \geq 1$ and $1 \leq j \leq 2$. We define the remaining vertex labels as 
    \begin{align*}
        \ell(u_i) &= 12k + \ell(u_j), \hspace{.5cm} \ell(v_i) = 12k + \ell(v_j) \\
        \ell(w_i) &= 12k + \ell(w_j), \hspace{.5cm} 
        \ell(x_i) = 12k + \ell(x_j).
    \end{align*}

    There are sixteen pairs of adjacent vertices involving the initial eight vertices. One can observe that these satisfy the relatively prime condition given the explicitly defined labels and the four vertices with $i=3$. Note that the difference between any two adjacent vertices is 1, 2, 3, 4, 8, or 9, all powers of 2 or 3. 

    Consider the adjacent vertex pair $u_iu_{i+1}$ for $1 \leq i \leq n-1$, where $i = 2k + j$ for some $k \geq 1$ and $1 \leq j \leq 2$. We have $\ell(u_i) = 12k + \ell(u_j)$ and $\ell(u_{i+1}) = 12k + \ell(u_{j+1})$. Since $u_j$ and $u_{j+1}$ are one of the first sixteen pairs of adjacent vertices, their labels are relatively prime. Shifting them by a multiple of 12 will maintain this condition for $\ell(u_i)$ and $\ell(u_{i+1})$ since their difference was a power of 2 or 3. This reasoning applies analogously to the remaining vertex pairs $v_iv_{i+1}$, $w_iw_{i+1}$, and $x_ix_{i+1}$ for $1\leq i\leq n-1$, and $u_iv_i, u_ix_i, x_iw_i,$ and $w_iv_i$ for $1 \leq i \leq n$. 

    To label the edges, we identify a Hamiltonian cycle and chord within the graph. Using the values $8n - 4, 8n - 3, \ldots, 12n - 5$, consecutively label the cycle $$(u_1, u_2, \ldots, u_n, v_n, v_{n-1}, \ldots, v_1, w_1, w_2, \ldots, w_n, x_n, x_{n-1}, \ldots, x_1, u_1).$$ For the chord, assign $\ell(u_1 v_1) = 12n - 4$. The other edges can be assigned the remaining unused values in $\{1, 2, \ldots, 12n - 4\}$. Using the reasoning from Theorem~\ref{Hamiltonian}, we have that for every vertex, the gcd of the labels of all incident edges is equal to 1.

    All that remains to show is that the vertex labels and edge labels are assigned distinctly. In total, there are $12n - 4$ labels, with the largest $4n + 1$ labels assigned to the edges in the cycle and chord. We must verify that $\max\{\ell(v) | v \in V\} \leq  8n - 5$ is true for all $n \geq 2$. Note that for $n = 2$, the largest vertex label is 11, and $11 \leq 8n - 5 = 11$. For $n \geq 3$, $n$ can be expressed as either $n = 2k + 1$ or $n = 2k + 2$ for some $k \geq 1$.

    When $n = 2k + 1$, we have that $\max\{\ell(v) | v \in V\} = 12k + 5$. Because $12k + 5 \leq 8n - 5 = 16k + 3$ is true for all $k \geq 1$, the edge and vertex labels are distinct when $n = 2k + 1$. For $n = 2k + 2$, $\max\{\ell(v) | v \in V\} = 12k + 11$. As before, we consider $12k + 11 \leq 8n - 5 =  16k + 11$, which is also true for all $k \geq 1$. The labeling is bijective for all $n \geq 2$, and thus $\ell$ is a total prime labeling.
\end{proof}

\section{Trees}\label{sec: trees}

We begin our examination of trees with the particular class of the \textit{bistar}, which is a graph obtained by connecting the centers of two star graphs with an edge. Formally, we define $B_{m,n}$ to consist of two stars, $K_{1,m}$ and $K_{1,n}$, where $m\leq n$. Let $u$ be the center of $K_{1,m}$ which is connected to the vertices $w_1, w_2,\ldots, w_m$, and $v$ be the center of $K_{1,n}$ adjacent to $x_1,x_2,\ldots, x_n$. The edge $uv$ completes the structure of the bistar. See Figure~\ref{Bistar} for an example of a total prime labeling on $B_{4,5}$.

Note that the bistar was claimed to be total prime in~\cite{RK}. However, their labeling fails to be injective, as vertex and edge labels overlap in cases such as $m=4$, $n=6$. We provide the following labeling to confirm that their claim is in fact true for all sizes of bistar. 

\begin{figure}[h]
    \centering
\includegraphics[scale=1]{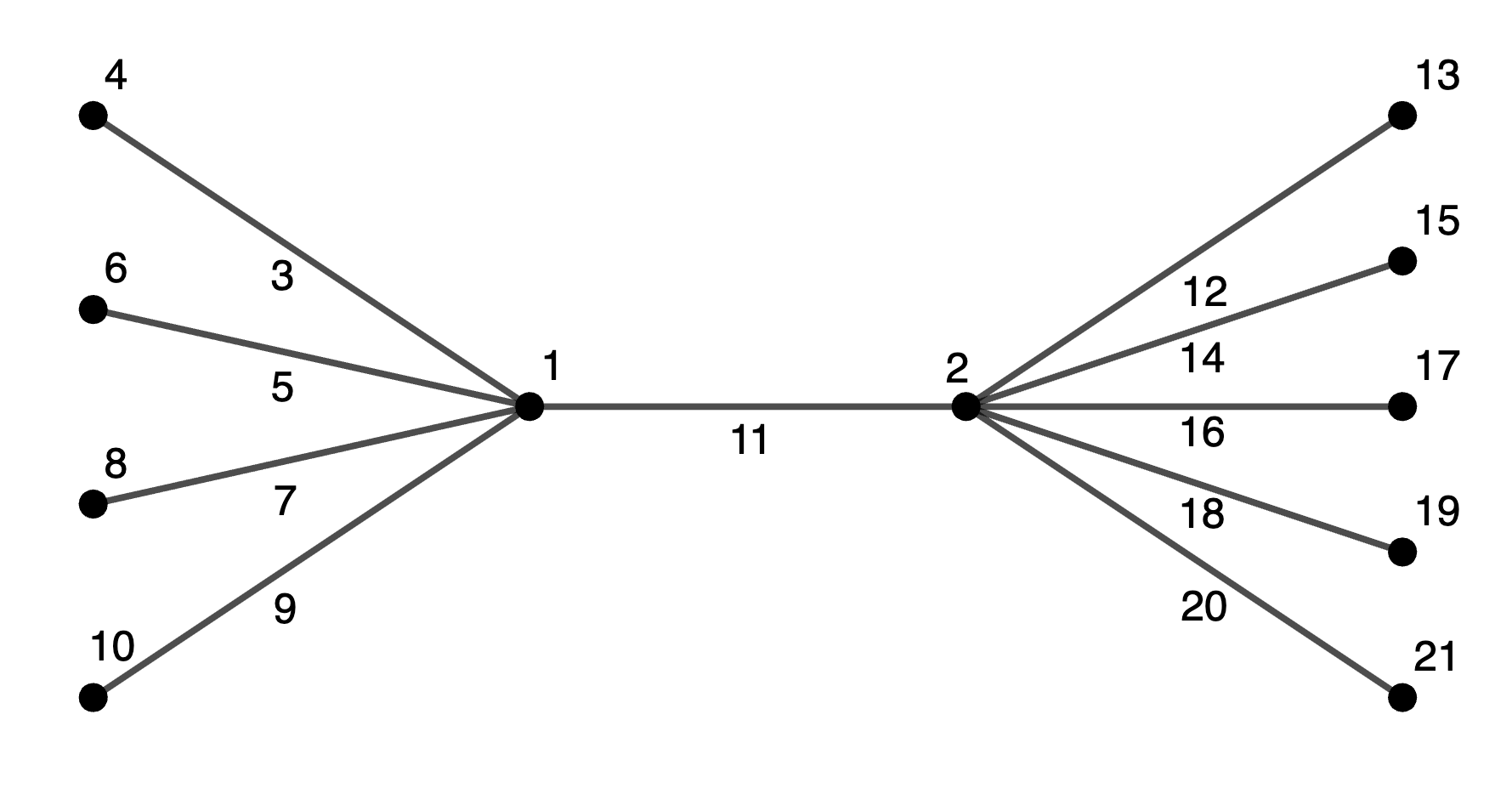}
    \caption{The bistar $B_{4,5}$ with a total prime labeling}
    \label{Bistar}
\end{figure}

\begin{theorem}
    The bistar $B_{m,n}$ is total prime for all $m,n\geq 1$.
\end{theorem}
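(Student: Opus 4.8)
The plan is to collapse almost every condition to triviality by a careful choice of just two vertex labels, and then dispose of what remains with a tiny block of consecutive edge labels. Write $N := |V\cup E| = (m+n+2) + (m+n+1) = 2m+2n+3$, and observe that among all vertices only the two centers $u$ and $v$ have degree at least $2$. First I would set $\ell(u) = 1$; this immediately gives $\gcd(\ell(u),\ell(v)) = 1$ and $\gcd(\ell(u),\ell(w_i)) = 1$ for every $i$, so the only surviving adjacent-vertex condition is $\gcd(\ell(v),\ell(x_j)) = 1$ for all $j$. To kill that one as well, I would let $p$ be the largest prime with $p \le N$; since $m,n \ge 1$ we have $N \ge 7$, so $p \ge 7$, and by Bertrand's postulate $p > N/2$, which forces $p$ to be relatively prime to every other label in $\{1,\dots,N\}$. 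Setting $\ell(v) = p$ then makes all pairs of adjacent vertex labels relatively prime, no matter how the rest of the labels are distributed.

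For the edge conditions, since every $w_i$ and $x_j$ has degree $1$, only $u$ and $v$ need to be checked. Here I would plant the three consecutive labels $2,3,4$ on three specific edges: $\ell(vx_1) = 2$, $\ell(uv) = 3$, and $\ell(uw_1) = 4$. These edges exist because $m,n \ge 1$, and the labels $1,2,3,4,p$ are five distinct values since $p \ge 7$. Now the incident edges of $u$ include the consecutive pair $3,4$ and the incident edges of $v$ include the consecutive pair $2,3$, so in both cases the gcd of the incident-edge labels divides $1$. It then remains only to distribute the $N - 5 = 2m+2n-2$ unused labels bijectively over the $m+n$ leaves and the $m+n-2$ still-unlabeled edges $uw_2,\dots,uw_m,vx_2,\dots,vx_n$; the two counts agree, and there are no further constraints (the leaves have degree $1$, leaf--center coprimality is already guaranteed because $\ell(u)=1$ and $\ell(v)=p$, and the gcd of incident edges at $u$ and at $v$ is already $1$), so any bijection finishes the labeling.

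I do not expect a genuine obstacle: the construction is short and every verification is immediate. The single point that actually requires care --- and it is exactly where the labeling of~\cite{RK} broke down --- is injectivity, namely ensuring the vertex labels and edge labels do not overlap. The prime trick handles this cleanly by decoupling the two ``special'' labels $1$ and $p$ from everything else and leaving an exact count of leftover labels for the leftover objects. (A more elementary attempt, taking $\ell(v)=2$ and forcing every $x_j$ to receive an odd label, can also be made to work, but it requires bookkeeping on how many odd labels remain available for the $x_j$; the prime choice is the tidier route and mirrors the use of Bertrand's postulate in the book-graph proof above.)
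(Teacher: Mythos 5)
Your proof is correct, but it takes a genuinely different route from the paper's. The paper gives a fully explicit labeling with $\ell(u)=1$, $\ell(v)=2$, interleaving vertex and edge labels so that the $x_j$ all receive odd labels (coprime to $2$), the edges at $u$ receive consecutive odd integers $3,5,\ldots,2m+3$, and the edges at $v$ receive $2m+3,2m+4,2m+6,\ldots$ with the first two consecutive. Your argument instead pins down only five labels: $1$ and the largest prime $p\le N$ on the two centers, and the consecutive triple $2,3,4$ on the path of edges $vx_1,\,uv,\,uw_1$, after which every remaining label can be placed arbitrarily. The Bertrand's-postulate step is sound (for $N\ge 7$ the largest prime $p\le N$ satisfies $p>N/2$, hence is coprime to every other element of $\{1,\ldots,N\}$), the count $N-5=(m+n)+(m+n-2)$ checks out, and since only $u$ and $v$ have degree at least $2$, the planted pairs $\{3,4\}$ and $\{2,3\}$ settle both incident-edge gcd conditions regardless of how the rest is filled in. What your approach buys is robustness and brevity --- it is the same ``large prime'' device the paper uses for odd-$k$ book graphs, and it makes the injectivity issue (where the labeling of~\cite{RK} failed) trivial by construction. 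What the paper's approach buys is a completely elementary, closed-form labeling with no appeal to Bertrand's postulate. Both are valid proofs of the theorem.
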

\begin{proof}
    This graph has $m+n+2$ vertices and $m+n+1$ edges. Then we create a labeling $\ell: V\cup E\rightarrow \{1,2,\ldots, 2m+2n+3\}$ as follows:
    $$    \ell(u)=1, \;\;
        \ell(v)=2,\;\;
        \ell(uw_i)=2i+1,\;\;
        \ell(w_i)=2i+2, \text{ for } i=1,2,\ldots, m,$$
    $$    \ell(uv)=2m+3, \;\; 
        \ell(vx_j)=2m+2j+2,\;\;
        \ell(x_j)=2m+2j+3, \text{ for } j=1,2,\ldots, n.
    $$

    Observe that all adjacent pairs of vertices involve at least one of the centers $u$ and $v$ of the stars. Since $\ell(u)=1$, its label is relatively prime with $\ell(v)$ and all $\ell(w_i)$. Since $\ell(v)=2$ and each $\ell(x_j)$ is odd, these labels are relatively prime as well.

    The only vertices with degree greater than 1 are $u$ and $v$. For edges incident on $u$, we have 
    $$\gcd(\ell(uw_1),\ldots, \ell(uw_m),\ell(uv))=\gcd(3,5,\ldots, 2m+1, 2m+3)=1$$
    since this set of labels contains consecutive odd integers. The edges incident on $v$ result in 
    $$\gcd(\ell(uv),\ell(vx_1),\ldots, \ell(vx_n))=\gcd(2m+3, 2m+4,2m+6,\ldots, 2m+2n+2)=1$$
    because the first two labels are consecutive.

    Since all pairs of adjacent vertices have relatively prime labels, and the gcd of the labels of incident edges is 1 for each internal vertex, this is a total prime labeling of $B_{m,n}$.
\end{proof}

Rather than investigating other individual classes of trees to determine if each is total prime, we take a more general approach and rely on the vast literature of prime labelings of trees. The following result demonstrates how the vertex labels of any prime tree can be extended by labeling its edges to form a total prime labeling. Figure~\ref{CBT} shows how a total prime labeling is constructed for a complete binary tree of depth 4.

\begin{figure}[h]
    \centering
\includegraphics[scale=1]{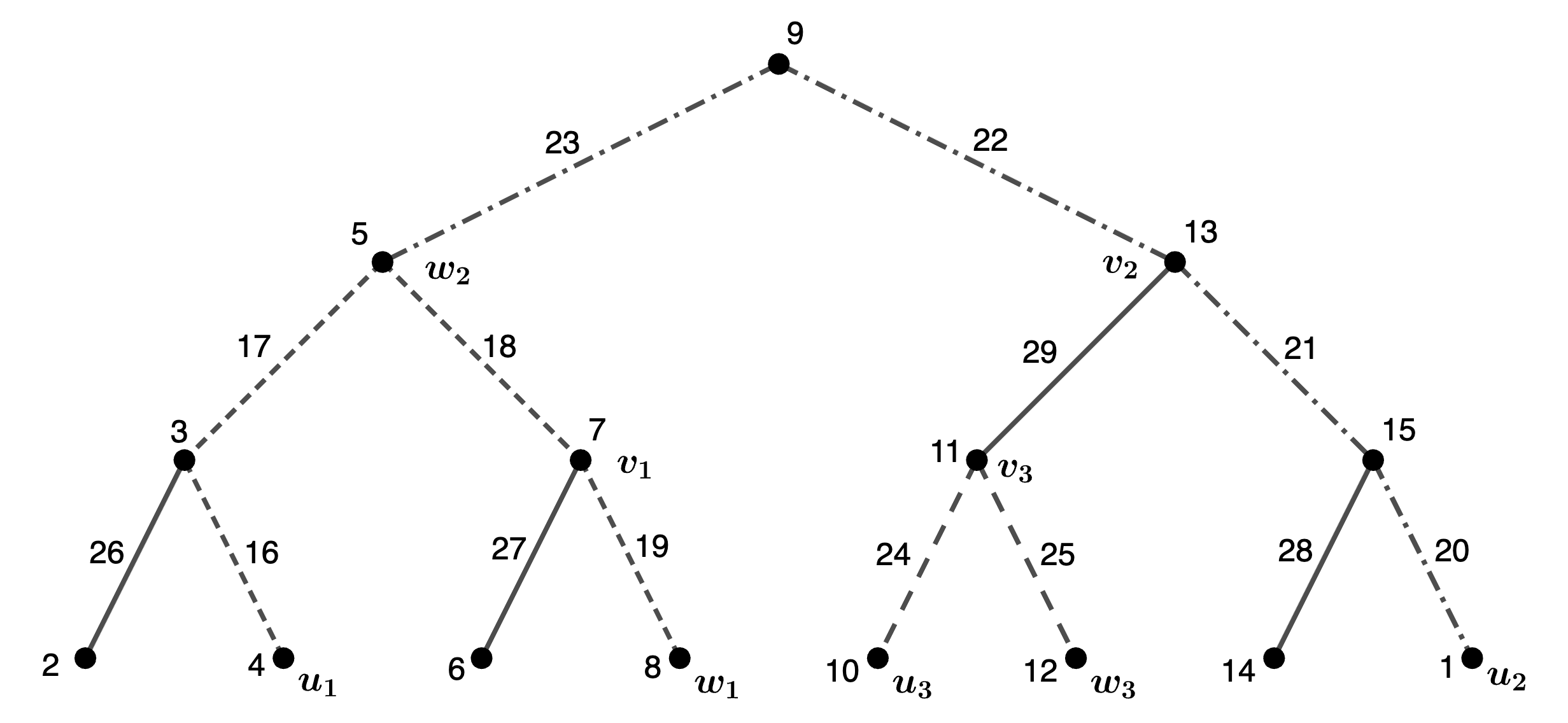}
    \caption{A complete binary tree with 4 levels with a total prime labeling, where vertices are labeled as in~\cite{FH} and edges are labeled using the indicated paths as described in Theorem~\ref{trees}}
    \label{CBT}
\end{figure}

\begin{theorem}\label{trees}
    If $G$ is a tree that is prime, then $G$ is total prime.
\end{theorem}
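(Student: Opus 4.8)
The plan is to extend the given prime vertex labeling $f\colon V\to\{1,2,\dots,n\}$ of the tree $G$ to a bijection $\ell\colon V\cup E\to\{1,2,\dots,2n-1\}$ (recall a tree of order $n$ has $n-1$ edges) by assigning the labels $n+1,n+2,\dots,2n-1$ to the edges. Setting $\ell(v)=f(v)$ for all $v\in V$ immediately handles the adjacent-vertex condition, so the entire task reduces to labeling the edges so that every vertex of degree at least $2$ has two incident edges with consecutive labels (which forces their gcd, hence the gcd of the whole incident set, to be $1$). The natural device for producing many consecutive pairs in a tree is a \emph{path decomposition}: decompose the edge set of $G$ into edge-disjoint paths and label the edges of each path consecutively, with one block of consecutive integers per path. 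Along any path of length $\ell\ge 1$, every internal vertex of that path gets two consecutively labeled incident edges for free, so the only vertices that could fail the gcd condition are those that are endpoints of every path through them.

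First I would recall (or prove) the standard fact that a tree $G$ can be decomposed into exactly $k$ edge-disjoint paths where $k$ is the number of odd-degree vertices divided by $2$; more usefully, one can always choose such a decomposition in which every vertex of degree $\ge 2$ is an internal vertex of at least one of the paths. The cleanest way to guarantee this: repeatedly pair up leaves through the tree — pick two leaves whose connecting path is chosen as a decomposition path, delete its edges, and recurse on the remaining forest, taking isolated vertices out. A vertex $v$ with $\deg_G(v)=d$ is split among the paths so that it is an endpoint of at most... actually the key point is just that we only ever make $v$ an endpoint of a path when $v$ has become a leaf in the current forest, and a degree-$\ge 2$ vertex is passed through as an interior point the first time a path uses two of its edges. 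One should state this as a small lemma: \emph{every tree has an edge-decomposition into paths such that each vertex of degree $\ge 2$ is interior to some path}. Then order the paths $P_1,P_2,\dots,P_t$ arbitrarily, let $P_s$ have $e_s$ edges, and assign to $P_1$ the labels $n+1,\dots,n+e_1$ in path order, to $P_2$ the labels $n+e_1+1,\dots,n+e_1+e_2$, and so on; since $\sum e_s=n-1$ this uses exactly $\{n+1,\dots,2n-1\}$ bijectively.

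With that labeling in hand the verification is short: the vertex condition holds because $f$ is prime; each edge label lies in $\{n+1,\dots,2n-1\}$ and each vertex label in $\{1,\dots,n\}$, so $\ell$ is a bijection; and for any vertex $v$ with $\deg(v)\ge 2$, $v$ is interior to some path $P_s$, so two of its incident edges receive consecutive integers, giving $\gcd$ of the incident-edge labels equal to $1$. Hence $\ell$ is a total prime labeling of $G$. I expect the only real obstacle to be the path-decomposition lemma — specifically arguing that the decomposition can be chosen so that \emph{no} degree-$\ge2$ vertex is left as an endpoint of all its paths. The leaf-pairing argument handles it: induct on the number of edges, at each step picking a maximal path (e.g.\ one joining two leaves that passes through a chosen high-degree vertex as an interior point whenever possible) and removing it; a vertex of degree $\ge 2$ in $G$ eventually has exactly one or two edges left and at the moment it first has two edges in a common chosen path it becomes interior, while if its degree is odd it will be interior on an earlier path before becoming a leaf. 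If one prefers to avoid even this, an alternative is to root $G$, list the leaves $z_1,\dots,z_L$ in DFS order, and form paths $z_1\!\to\!z_2$, $z_3\!\to\!z_4,\dots$ together with one extra short path to mop up any leftover edges, which again makes every internal vertex interior to a path; either route works, and the rest of the proof is immediate.
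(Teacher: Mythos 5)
Your proof is correct and follows essentially the same strategy as the paper: keep the prime vertex labeling and assign the edge labels consecutively along a system of edge-disjoint paths chosen so that every vertex of degree at least $2$ is an interior vertex of one of them, which forces two consecutive incident edge labels there. The only cosmetic difference is that you decompose the entire edge set via iterated leaf-to-leaf path removal, whereas the paper grows paths outward from not-yet-covered non-leaf vertices and assigns the leftover edge labels arbitrarily; both constructions deliver the same key property.
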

\begin{proof}

    Let $f: V \rightarrow \{1, 2, \ldots, |V|\}$ be the prime labeling of the vertices. We define a labeling $\ell: V \cup E \rightarrow \{1, 2, \ldots, |V| + |E|\}$ with $\ell(v) = f(v)$. To label the edges, we sequentially label paths that collectively form an edge cover of the set of vertices which have degree at least 2.

    We begin with a vertex $v_1$ with $\deg(v_1) \geq 2$. Since $G$ is a tree, we can create disjoint paths from $v_1$ to two distinct leaves $u_1$ and $w_1$. Joined together, they form a path $P_1 = u_1, \ldots, v_1, \ldots, w_1$. Note that if $G=P_1$, then we are done since paths are known to be total prime~\cite{RK}. 

    Otherwise, we proceed by creating additional paths until all non-leaf vertices are on the interior of one such path. For the path $P_2$, we consider a second non-leaf vertex $v_2$ that does not lie on the path $P_1$. We create a path going out from $v_2$ and ending when we first reach a vertex $u_2$ that is a leaf or is on the previously constructed path. Similarly, we create a distinct path from $v_2$ to $w_2$, which again is the first vertex reached on $P_1$ or is a leaf. Together, these two paths form $P_2 = u_2, \ldots, v_2, \ldots, w_2$, which does not share any edge with $P_1$.

    Continue this process of creating paths $P_i = u_i, \ldots, v_i, \ldots, w_i$ where $v_i$ is a non-leaf vertex not on $P_j$ for $j<i$. Each path $P_i$ may share a vertex with a previous $P_j$, but only if it is the endpoint of $P_i$, resulting in any edge from $G$ being included in at most one path. We conclude when all non-leaf vertices are included in $P_1 \cup P_2 \cup \ldots \cup P_k$ for some $k\geq 1$, specifically as internal vertices.

    Let $n_i$ be the length of $P_i$. Consecutively label the edges of each path as follows:
    \begin{align*}
        &P_1: \hspace{2mm} |V| + 1, |V| + 2, \ldots, |V| + n_1, \\
        &P_2: \hspace{2mm} |V| + n_1 + 1, \ldots, |V| + n_1 + n_2, \\
        &\hspace{2mm}\vdots \\
        &P_k: \hspace{2mm} |V| + n_1 + \ldots + n_{k-1} + 1, \ldots, |V| + n_1 + \ldots + n_k.
    \end{align*}
    The remaining edges in $E(G)$ that do not lie on a path $P_i$ can be distinctly assigned the remaining labels $|V| + n_1 + \ldots + n_k+1, |V| + n_1 + \ldots + n_k+2, \ldots, |V|+|E|$ in any order.

    For any vertex $x$ with $\deg(x) \geq 2$, the gcd of the labels of its incident edges is 1 since it is an internal vertex of a path $P_i$, implying two of its incident edges are labeled consecutively. Additionally, the vertex labeling $f$ was given to be prime, so all pairs of adjacent vertices have relatively prime labels. Therefore, G is total prime.
\end{proof}

The following result is a collection of prime trees for which we can construct a total prime labeling. This is by no means an exhaustive list, as much other work has been done on prime labelings of trees. Furthermore, all trees were conjectured to be prime~\cite{TDH}, a result which, if proven, would carry over to total prime labelings.

\begin{corollary}
    The following classes of trees are total prime:
    \begin{enumerate}[(a)]
        \item All spider graphs 
        \item The $t$-toed caterpillar for any number of toes $t\geq 1$ and length $n\geq 1$
        \item Complete binary trees $T_2(n)$ of any number of levels $n\geq 1$ 
        \item Palm trees $PT_{n,k}$ for any $n,k\geq 1$ 
        \item Banana trees of any size 
        \item All trees with at most 50 vertices 
    \end{enumerate}
\end{corollary}
\begin{proof}
    These trees are total prime as a direct application of Theorem~\ref{trees} based on the following results for prime labelings: (a)~\cite{LWY}, (b)~\cite{TDH}, (c)~\cite{FH}, (d)~\cite{RS}, (e)~\cite{RS}, and (f)~\cite{Pik}.
\end{proof}

\section{Non-Total Prime Graphs}\label{sec: non-TPL}  

The only graph previously shown to not be total prime is the odd cycle $C_{2k+1}$ for any $k\geq 1$~\cite{RK}. We will introduce larger classes of graphs that do not admit a total prime labeling, all of which include an odd cycle as a subgraph. These will consist of the union of disjoint graphs, denoted $G\cup H$, with the vertex set $V(G)\cup V(H)$ and edge set $E(G)\cup E(H)$.

Our proof involving the union of cycles will rely on a connection to prime labelings of similar unions, which were investigated in~\cite{DLM}. Specifically, any union of cycles is not prime if at least two of the cycles are of odd length.

\begin{theorem}
    The union of $n$ cycles $\bigcup_{k=1}^n C_{i_k}$ is not total prime if at least one cycle length $i_k$ is odd.
\end{theorem}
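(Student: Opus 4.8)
The plan is to derive a contradiction by converting a hypothetical total prime labeling of $G=\bigcup_{k=1}^n C_{i_k}$ into an ordinary prime labeling of a union of cycles that has at least two odd components, which is impossible by the result of~\cite{DLM} quoted above. Write $N=\sum_{k=1}^n i_k$, so $G$ has $N$ vertices and $N$ edges, and a total prime labeling is a bijection $\ell\colon V\cup E\to\{1,2,\ldots,2N\}$.

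First I would form the graph $G'=G\sqcup L(G)$, where $L(G)$ is the line graph of $G$. Since the line graph of $C_m$ is again $C_m$ and the line graph of a disjoint union is the disjoint union of the line graphs, $L(G)\cong G$, so $G'$ is a disjoint union of $2n$ cycles with $|V(G')|=2N$. Transfer $\ell$ to $G'$ by labeling the vertices of the first copy with $\ell|_{V(G)}$ and the vertices of the second copy, which are the edges of $G$, with $\ell|_{E(G)}$. This is a bijection from $V(G')$ onto $\{1,\ldots,2N\}$ precisely because $\ell$ is.

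The key step is verifying that this labeling of $G'$ is prime. Two adjacent vertices in the first copy are adjacent in $G$, so their labels are relatively prime by the first condition of a total prime labeling. Two adjacent vertices in the second copy correspond to two edges of $G$ sharing a common vertex $v$; since $G$ is $2$-regular, $v$ has degree exactly $2$, so the second condition of a total prime labeling forces $\gcd$ of the labels of its two incident edges to be $1$, i.e.\ exactly that those two edge labels are relatively prime. Hence $G'$ is prime. But if some $i_k$ is odd, then $G'$ contains two disjoint copies of the odd cycle $C_{i_k}$, so it has at least two odd cycle components, contradicting~\cite{DLM}. Therefore $G$ admits no total prime labeling.

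This is essentially a parity count in disguise, and one could instead argue directly: among the $2N$ labels there are $N$ even ones, those placed on vertices form an independent set of $G$ and those placed on edges form an independent set of $L(G)\cong G$, so at most $2\sum_k\lfloor i_k/2\rfloor=N-\#\{k:i_k\text{ odd}\}$ even labels can be placed, which is less than $N$ as soon as one $i_k$ is odd. I expect the only real care needed is the bookkeeping for the line-graph identification together with the observation that $2$-regularity collapses the incident-edge gcd condition to pairwise coprimality of the two incident edges; the degenerate case $n=1$ simply recovers the known fact that $C_{2k+1}$ is not total prime~\cite{RK}, and allowing $i_k=3$ causes no difficulty.
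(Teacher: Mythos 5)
Your main argument is exactly the paper's proof: the paper forms $H=G\cup G$ and labels the second copy of each cycle with the edge labels of the first (using that every vertex of $G$ has degree $2$, so the incident-edge gcd condition becomes pairwise coprimality), which is precisely your $G\sqcup L(G)$ construction, and both conclude by contradicting the result of~\cite{DLM} that a union of cycles with at least two odd components is not prime. The direct parity count you sketch at the end is also correct and would make the proof self-contained, but the route you actually take coincides with the paper's.
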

\begin{proof}
    Assume $G=\bigcup_{k=1}^n C_{i_k}$ is a union in which at least one $i_k$ is odd. Suppose for the sake of a contradiction that $\ell$ is a total prime labeling of $G$.
    
    Consider a single cycle within the union, say $C_{m}$ on the vertices $v_1,v_2,\ldots, v_{m}$. The total prime labeling $\ell$ would require that pairs of labels $\ell(v_j)$, $\ell(v_{j+1})$ be relatively prime for all $j=1,2,\ldots, m-1$, in addition to $\ell(v_m)$, $\ell(v_1)$. Since every vertex is of degree 2, the total prime condition on the edge labels also results in relatively prime pairs $\ell(v_jv_{j+1})$, $\ell(v_{j+1}v_{j+2})$ for $j=1,2,\ldots, m-2$, along with pairs $\ell(v_{m-1}v_{m}),\ell(v_{m}v_{1})$ and $\ell(v_{m}v_{1}),\ell(v_{1}v_{2})$.

    Now consider a graph $C_{m}\cup C_{m}$ on vertices $u_1,u_2,\ldots, u_{m}$ and $x_1,x_2,\ldots, x_{m}$. Define a vertex labeling $f$ of this union graph by $f(u_j)=\ell(v_j)$ and $f(x_j)=\ell(v_jv_{j+1})$ for $j=1,2,\ldots, m$ with the exception of $f(x_{m})=\ell(v_{m}v_1)$. Since the adjacent vertices in the second cycle mirror the pairs of incident edges of the graph $C_m$ within $G$, the assumption that $\ell$ is total prime implies the labeling $f$ satisfies the relatively prime condition for a prime labeling of $C_m\cup C_m$.

    Define the graph $H$ to be a union of cycles with each component cycle of $G$ included twice in the union. That is, let $H=G\cup G$. Note that $H$ has at least two odd cycles since there was an odd cycle within $G$. The labeling $f$ defined above for the two copies of $C_{i_k}$, for each $i_k$, would satisfy the relatively prime condition. Additionally, it would be injective since $|V(H)|=|V(G)\cup E(G)|$, making $f$ a prime labeling.

    This is a contradiction because, as shown in~\cite{DLM}, a union of cycles cannot be prime if there are at least two odd cycles. Therefore, a total prime labeling cannot exist for this union of cycles.
\end{proof}

For our final result, we demonstrate that any graph $G$ will become not total prime when a disjoint union is applied with sufficiently many 3-cycles. We use the notation $mC_3$ to mean the union of $m$ disjoint copies of $C_3$. The reasoning behind our proof can be applied to larger odd cycles as well. Additionally, a tighter lower bound for $m$ can be determined at a lower value once a specific graph $G$ is chosen. An example of this is seen in Figure~\ref{Union-of-odd-cycles}, which shows how only $m=2$ is sufficient to union with $K_5$ to make it not total prime.

\begin{figure}[h]
    \centering
\includegraphics[scale=.55]{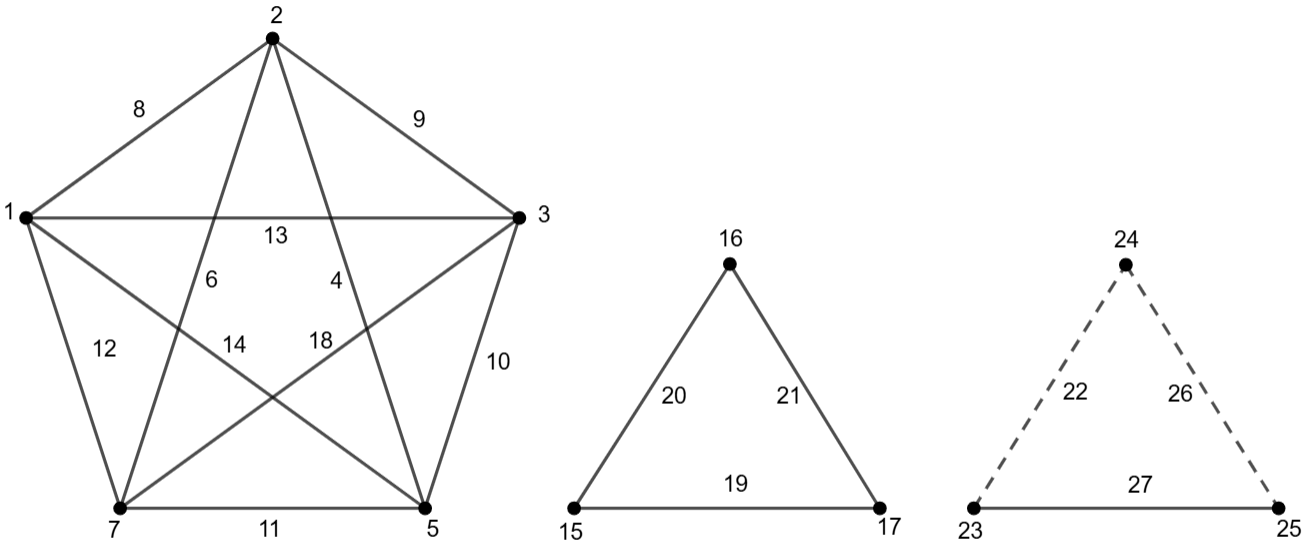}
    \caption{The graph $K_5\cup C_3\cup C_3$ with a labeling that fails to be total prime}
    \label{Union-of-odd-cycles}
\end{figure}

\begin{theorem}
   For all graphs $G$ of order $n\geq 2$, $G\cup mC_3$ is not total prime if $m>\frac{n(n+1)}{2}$.
\end{theorem}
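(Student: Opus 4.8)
The plan is to argue by contradiction through a parity count of the label set. Suppose $\ell$ is a total prime labeling of $G\cup mC_3$, and write $e=|E(G)|$. Since $G\cup mC_3$ has $n+3m$ vertices and $e+3m$ edges, $\ell$ is a bijection onto $\{1,2,\dots,N\}$ with $N=n+e+6m$, of which exactly $\lfloor N/2\rfloor$ labels are even. Because $G$ and the $m$ triangles are disjoint components, the set of even labels is partitioned between the $mC_3$ part and the $G$ part, and the strategy is to bound how many even labels each part can absorb.

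First I would establish the key local fact: each copy of $C_3$ carries at most two even labels among its six labels (three vertices, three edges). The three vertices of a triangle are pairwise adjacent, so the first total prime condition makes their labels pairwise coprime, whence at most one is even. For the edges, any two edges of a triangle meet at a vertex of degree $2$, so the second total prime condition applied at that vertex forces those two edge labels to be coprime; hence the three edge labels are pairwise coprime and at most one is even. Summing over the $m$ triangles, at most $2m$ even labels appear on the $mC_3$ part.

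Next, for the $G$ part there is no structural restriction to exploit, so I would bound crudely by the number of objects: $G$ has $n+e\le n+\binom{n}{2}=\tfrac{n(n+1)}{2}$ vertices and edges together, so at most $\tfrac{n(n+1)}{2}$ even labels lie on $G$. Combining the two bounds gives $\lfloor N/2\rfloor\le 2m+\tfrac{n(n+1)}{2}$. On the other hand, $n\ge 2$ forces $N=n+e+6m\ge 6m+2$, so $\lfloor N/2\rfloor\ge 3m+1$. Together these give $3m+1\le 2m+\tfrac{n(n+1)}{2}$, that is $m\le\tfrac{n(n+1)}{2}-1$, which contradicts the hypothesis $m>\tfrac{n(n+1)}{2}$. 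Hence $G\cup mC_3$ admits no total prime labeling.

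I do not expect a real obstacle here; the only delicate points are justifying the coprimality of the three edge labels of $C_3$ from the degree-$2$ gcd condition (rather than from any adjacency of edges), and organizing the estimates so that the stated bound $\tfrac{n(n+1)}{2}$ emerges cleanly. I would also remark that a sharper threshold of roughly $m>\tfrac{n+e+1}{2}$ follows from the same count, which is what lets the $K_5$ example of Figure~\ref{Union-of-odd-cycles} fail already at $m=2$, and that the identical argument applies with any odd cycle $C_{2k+1}$ in place of $C_3$: the vertex set and the edge set of $C_{2k+1}$ each contain an independent set of size at most $k$, so every copy absorbs at most $2k$ of its $4k+2$ labels as even labels, and the parameter $k$ cancels out of the final inequality in exactly the same way.
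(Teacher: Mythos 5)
Your proposal is correct and is essentially the paper's argument: both hinge on the observation that each triangle can carry at most one even vertex label and at most one even edge label (the vertex labels being pairwise coprime by adjacency, the edge labels pairwise coprime via the degree-$2$ gcd condition), and then derive a contradiction by a parity count of the label set using $n+e\le \tfrac{n(n+1)}{2}$. The only cosmetic difference is that you count the even labels and where they can be placed, while the paper counts the (at least $4m$) odd labels the triangles demand against the fewer-than-$4m$ odd labels available --- complementary versions of the same estimate.
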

\begin{proof}
Let $G$ be a graph on $n$ vertices, and assume $m>\frac{n(n+1)}{2}$. The graph $G\cup mC_3$ has $|V|=3m+n$ vertices, and while the number of edges $|E|$ is not known, the maximum number occurs when $G=K_n$. Therefore, $|E|\leq 3m+\frac{n(n-1)}{2}$. We focus on the parity of the integers of a labeling $\ell$ on $V\cup E$, where $|V\cup E|\leq 6m+\frac{n(n+1)}{2}$.

For each of the $m$ copies of $C_3$, at most one even label can be used on a vertex to avoid a common factor of 2 between adjacent vertex labels. Likewise, at most one even edge label can be assigned on each cycle, else the two incident edge labels for a vertex would have a gcd of at least 2. Hence, we can assign at most $2m$ even labels on the vertices and edges of $mC_3$, and therefore at least $4m$ odd labels are necessary.

Our labeling set contains at most the integers from 1 to $6m+\frac{n(n+1)}{2}$. Then, using our assumptions of $n\geq 2$ and $m>\frac{n(n+1)}{2}$, the number of odd labels is at most
\begin{align*}
    \left\lceil\frac{6m+\frac{n(n+1)}{2}}{2}\right\rceil
    &\leq 3m+\frac{n(n+1)}{4}+1\\
    &<3m+\frac{n(n+1)}{2}\\
    &<3m+m\\
    &=4m.
\end{align*}
We have shown there are not enough odd labels in the entire labeling set for the labeling $\ell$ to satisfy the necessary conditions on the adjacent vertices and incident edges of the $mC_3$ components. Thus, the graph $G\cup mC_3$ is not total prime.
\end{proof}

\section{Open Problems}\label{sec: open}

We conclude with some open problems involving total prime labelings. 

\begin{itemize}
    \item Complete bipartite graphs $K_{m,n}$ have only been shown to be prime in the cases of $m=1$ or $2$~\cite{RK}. Is $K_{m,n}$ total prime for all $m,n\geq 1$?
    \item Can our labeling of the windmill graph when $m$ or $n$ is small be extended to show $K_n^{(m)}$ is total prime for all $m\geq 2$ and $n\geq 4$?
    \item While minimum coprime labelings of higher powers of paths and cycles have not been developed, could the additional flexibility from the edge labels allow total prime labelings to be constructed for $P_n^k$ and $C_n^k$ with $k\geq 4$? Likewise, can our labelings be generalized for $Y_{k,n}$ with $k\geq 6$? 
    \item Since the complete graph $K_n$ is total prime for all $n\geq 4$, can a condition relating to the density of a graph be developed to guarantee a graph with a sufficient number of edges is total prime?
\end{itemize}

\end{document}